\tikzset{individu/.style={draw,thick}}
\theoremstyle{plain}
\newtheorem{theorem}{Theorem}[section]
\newtheorem{lemma}[theorem]{Lemma}
\theoremstyle{definition}
\newtheorem{remark}[theorem]{Remark}
\theoremstyle{remark}
\newcommand{\Z}{\mathbb{Z}}
\newcommand{\R}{\mathbb{R}}
\newcommand{\ind}[1]{\mathbf{1}_{\{#1\}}}
\numberwithin{equation}{section}
\DeclareMathOperator{\E}{\mathbf{E}}
\renewcommand{\P}{\mathbf{P}}
\newcommand{\calL}{\mathcal{L}}
\newcommand{\calU}{\mathcal{U}}
\newcommand{\calA}{\mathcal{A}}
\newcommand{\calN}{\mathcal{N}}
\renewcommand{\S}{\mathbf{S}}
\newcommand{\norm}[1]{{\left\Vert #1 \right\Vert}}
\title{Maximal displacement of $d$-dimensional branching~Brownian~motion}
\author{Bastien Mallein}
\date{\today}
\renewcommand{\tilde}[1]{\widetilde{#1}}
\renewcommand{\epsilon}{\varepsilon}
\renewcommand{\phi}{\varphi}
\begin{document}

\maketitle

\begin{abstract}
We consider a branching Brownian motion evolving in $\R^d$. We prove that the asymptotic behaviour of the maximal displacement is given by a first ballistic order, plus a logarithmic correction that increases with the dimension $d$. The proof is based on simple geometrical evidence. It leads to the interesting following side result: with high probability, for any $d \geq 2$, individuals on the frontier of the process are close parents if and only if they are geographically close.
\end{abstract}

\section{Introduction}
\label{sec:introduction}

Let $d \geq 1$. A \textit{$d$-dimensional branching Brownian motion} (or $d$-dim. BBM for short) is a particle process in which individuals displace according to independent Brownian motions and reproduce at rate 1. It starts with a unique individual positioned at $0 \in \R^d$ at time $t = 0$. This individual displaces according to a $d$-dimensional Brownian motion until time $T$, distributed as an exponential random variable independent of the displacement. At time $T$, the individual dies giving birth to two children on its current position. These two particles then start independent $d$-dimensional branching Brownian motions. 

For any $t \geq 0$, we write $\calN_t$ for the set of individuals alive at time $t$ in the process. Let $u \in \calN_t$ and $s \leq t$, we set $X_s(u)$ the position at time $s$ of the ancestor of $u$ that was alive at that time. The quantity of interest is $R_t = \max_{u \in \calN_t} \norm{X_t(u)}$. Bramson \cite{Bra78} proved the following asymptotic behaviour in dimension 1
\begin{equation}
  \label{eqn:bramson}
  R_t = \sqrt{2}t - \frac{3}{2\sqrt{2}} \log t + O_\P(1),
\end{equation}
where $O_\P(1)$ denotes a generic process $(Y_t, t \geq 0)$ such that $\lim_{K \to +\infty} \sup_{ t \geq 0} \P(|Y_t| \geq K) = 0$. This process has been intensively studied in dimension 1, partly because of its links with the FKPP equation: $\partial_t u = \frac{1}{2}\partial^2_x u + u(1-u)$. The function
\[
  u : (t,x)\in (0,+\infty) \times \R \longmapsto \P\left(\max_{u \in \calN_t} X_t(u) \geq x \right),
\]
is a travelling wave solution of the F-KPP equation.

Gärtner \cite{Gar81} studied a $d$-dimensional generalization of the F-KPP equation, solved by e.g.,
\[
  w : (t,x) \in (0,+\infty) \times \R^d  \longmapsto \P\left(\exists u \in \calN_t : \norm{X_t(u)-x} \leq 1 \right).
\]
It is proved that for large $t$, the function $w(t,.)$ admits a sharp cutoff located close to the ball of radius $\sqrt{2} t - \frac{d+2}{2\sqrt{2}} \log t$. Consequently the probability to find an individual $u \in \calN_t$ within distance $1$ of a given point $x$ is small if $\norm{x} \gg \sqrt{2} t - \frac{d+2}{2\sqrt{2}} \log t$ and large if $\norm{x} \ll \sqrt{2} t - \frac{d+2}{2\sqrt{2}} \log t$. Therefore if we replace every individual alive at time $t$ by a ball of radius 1 the radius $\rho_t$ of the percolation cluster containing the origin should verify $\rho_t = \sqrt{2}t - \frac{d+2}{2\sqrt{2}} \log t + O_\P(1)$. Observe the logarithmic correction decreases as the dimension increases.

However the projection of the $d$-dim. BBM on any given line is a $1$-dim. BBM. By \eqref{eqn:bramson}, with high probability the maximal displacement in the process is larger than $\sqrt{2} t - \frac{3}{2\sqrt{2}} \log t$. Consequently, $R_t$ has a different behaviour than $\rho_t$. In particular, we prove in Theorem \ref{thm:main} below that while the first order does not depend on the dimension, the logarithmic correction of $R_t$ increases with $d$. 

There have been few studies of quantities similar to $R_t$. In \cite{MRV15}, the authors consider a complex, thus $2$-dimensional, branching Brownian motion, but considered the process around its maximal value in 1 direction. Similarly, \cite{BoH14} also considered a model related to the $2$-dimensional branching Brownian motion, in which individuals diffuse as Brownian motions in one direction, and move at piecewise ballistic speed in the orthogonal direction. They described the extremal process, around the individual that travelled the farthest in the diffusive direction.

The main result of this article extends Bramson's result on the asymptotic behaviour of the maximal displacement $R_t$ to any dimension $d \geq 1$.
\begin{theorem}
\label{thm:main}
For any $d \geq 1$, we have
\[ R_t = \sqrt{2}t + \frac{d-4}{2\sqrt{2}} \log t + O_\P(1). \]
Moreover there exists $C>0$ such that for any $t \geq 1$ and $y \in [1,t^{1/2}]$,
\begin{equation}
  \label{eqn:main}
  \frac{ye^{-\sqrt{2}y}}{C} \leq \P\left( R_t \geq \sqrt{2}t + \frac{d-4}{2\sqrt{2}} \log t + y \right) \leq C ye^{-\sqrt{2}y}.
\end{equation}
\end{theorem}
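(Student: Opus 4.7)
The plan is to reduce the $d$-dimensional problem to Bramson's one-dimensional bound \eqref{eqn:bramson} by projecting onto a net of directions covering $S^{d-1}$. Fix a set of $N \asymp t^{(d-1)/2}$ unit vectors $\theta_1,\dots,\theta_N \in S^{d-1}$ so that every direction of $S^{d-1}$ lies within angular distance $1/\sqrt{t}$ of some $\theta_i$. For each $i$, the process $(\langle X_s(u),\theta_i\rangle : s \leq t,\, u \in \calN_s)$ is a one-dimensional BBM. The algebraic identity
\[
\sqrt{2}t + \tfrac{d-4}{2\sqrt{2}}\log t \;=\; \left(\sqrt{2}t - \tfrac{3}{2\sqrt{2}}\log t\right) + \tfrac{d-1}{2\sqrt{2}}\log t
\]
decomposes the new centering as Bramson's one-dimensional centering plus an ``entropy'' term $\tfrac{1}{\sqrt{2}}\log N$, corresponding to the effective number of independent projected BBMs.

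For the upper bound, any particle $u$ with $\norm{X_t(u)} \geq m_t + y$ projects onto the nearest $\theta_i$ at height $\geq \norm{X_t(u)}\cos(1/\sqrt{t}) \geq m_t + y - c$ for a universal $c$, since $\norm{X_t(u)}(1-\cos(1/\sqrt{t})) = O(1)$ when $\norm{X_t(u)} = O(t)$. Summing the Bramson tail $\P(M_t^{\mathrm{1d}} \geq m_t^{\mathrm{1d}} + z) \leq Cze^{-\sqrt{2}z}$ over the $N$ caps gives only the crude bound $\P(R_t \geq m_t + y) \leq C(\log t + y) e^{-\sqrt{2}y}$, off by a factor $\log t$ from the target $Cy e^{-\sqrt{2}y}$. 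To sharpen, I would exploit the clustering observation highlighted in the abstract: near-maximal particles cluster in angular neighbourhoods of size $\sim 1/\sqrt{t}$, so the $z$-prefactor in Bramson's one-dimensional tail (which reflects the temporal freedom of where the max is achieved) should not compound with the $N$-count of caps. Concretely, I would partition $\{R_t \geq m_t + y\}$ according to the trajectory structure of the argmax particle and use a refined many-to-one estimate that avoids double-counting across nearby caps.

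For the lower bound, I would carry out a second-moment argument on $Y_t$, the number of particles $u \in \calN_t$ with $\norm{X_t(u)} \geq m_t + y$ and radial trajectory satisfying a suitable logarithmic barrier $\norm{X_s(u)} \leq \sqrt{2}s + g(s)$ for all $s \in [0,t]$. The many-to-one lemma reduces $\E Y_t$ to a $d$-dimensional Bessel-type computation with absorbing barrier; after accounting for the Bessel endpoint density at radius $m_t+y$ and the standard ballot-factor contribution of the barrier, this gives $\E Y_t \asymp y e^{-\sqrt{2}y}$. The many-to-two formula, combined with a decomposition at the time of the most recent common ancestor, then gives $\E Y_t^2 \lesssim \E Y_t$: pairs splitting near time $0$ contribute at most $(\E Y_t)^2$, pairs splitting near time $t$ contribute at most $\E Y_t$, and intermediate splits are negligible thanks to the barrier. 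Paley--Zygmund then yields $\P(Y_t \geq 1) \gtrsim y e^{-\sqrt{2}y}$, hence $\P(R_t \geq m_t + y) \gtrsim y e^{-\sqrt{2}y}$.

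The principal difficulty is the upper bound: removing the spurious $\log t$ factor from the naive union bound requires exploiting the geometric correlation between near-maximal particles and their common ancestors, corresponding to the ``close parents iff geographically close'' phenomenon emphasised in the abstract. The precise formulation and quantitative control of this clustering is, I expect, the main technical work of the paper.
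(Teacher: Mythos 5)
Your reduction to one-dimensional projections over a net of $t^{(d-1)/2}$ directions, and your identification of the spurious $\log t$ in the resulting union bound, are both correct, and your decomposition of the centering into Bramson's term plus an entropy term matches the paper's heuristic. But your upper bound is left genuinely open: you say you would ``exploit clustering'' and ``avoid double-counting across nearby caps,'' yet the $\log t$ loss does not come from double-counting (a near-maximal particle typically activates only $O(1)$ caps, since its overshoot has exponential tail); it comes from the prefactor $z=\frac{d-1}{2\sqrt2}\log t+y$ in the one-dimensional ballot factor, i.e.\ from projected paths that linger near the barrier at intermediate times. The paper removes it with a first-moment frontier argument (Lemma \ref{lem:frontier}): with probability $1-Cye^{-\sqrt2 y}$ no particle ever exits the time-dependent ball of radius $f^{t,y}_s=\sqrt2 s+\frac{d-1}{2\sqrt2}\log(s+y)-\frac{3}{2\sqrt2}\log\frac{t+1}{t-s+1}+y$. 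The covering Lemma \ref{lem:geoUpperbound} is applied at each intermediate time $s$ to a sphere of radius $O(s+y)$, so only $(s+y)^{(d-1)/2}$ directions are needed there; this entropy is exactly cancelled by the Girsanov/excursion estimate \eqref{eqn:excursionBended}, and the sum over time slices of $\frac{(t+1)^{3/2}}{(k+1)^{3/2}(t-k+1)^{3/2}}$ converges to give $Cye^{-\sqrt2 y}$. Note also that the clustering statement \eqref{eqn:nice} is derived in the paper as a corollary of the second-moment computation, not used as an input to the upper bound, so your plan has the logical dependency reversed.

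Your lower bound sketch is closer to the paper but the decisive estimate is asserted rather than proved. The paper does not run a single second moment on a radially defined count: it defines $\calA^{t,y}_v$ direction by direction, proves $\E[\#\calA^{t,y}_v]\asymp ye^{-\sqrt2 y}t^{-(d-1)/2}$ and $\E[(\#\calA^{t,y}_v)^2]\le Cye^{-\sqrt2 y}t^{-(d-1)/2}$, and then must control the cross-moments $\E[\#\calA^{t,y}_v\,\#\calA^{t,y}_{v'}]$ as a function of the angle $\theta=\arccos(v.v')$ (Lemma \ref{lem:crossedmoment}) before summing over the net of $t^{(d-1)/2}$ directions. Your claim that ``intermediate splits are negligible thanks to the barrier'' is the one-dimensional heuristic; in dimension $d$ the pairs that split at an intermediate time and then diverge into distinct directions are exactly the delicate contribution, and bounding them requires the $\theta^{4}(s+y)^{(d+3)/2}e^{-c\theta^2(s+y)}$ estimate and the summation over the angular lattice carried out in Lemma \ref{lem:lowerbound}. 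Finally, the two-sided tail bound \eqref{eqn:main} does not by itself give $R_t=\sqrt2 t+\frac{d-4}{2\sqrt2}\log t+O_\P(1)$: one still needs $\lim_{h\to\infty}\sup_t\P(R_{t+h}\le r_t-2h)=0$, which the paper obtains by restarting the process from the geometrically many particles of the Yule process alive at time $h$; this tightness-from-below step is absent from your proposal.
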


In the rest of this article, $C$ stands for a generic positive constant, chosen large enough, that may change from line to line. Moreover, we write $x \wedge y$ for the minimum between $x$ and $y$ and $x_+$ as the maximum between $x$ and 0.

\begin{remark}
\label{rem:main}
As observed above, for any $v \in \S^{d-1}$ (the $d-1$-dimensional sphere), the process $((X_t(u).v, u \in \calN_t), t \geq 0)$ is a $1$-dim. BBM, thus $\max_{u \in \calN_t} X_t(u).v = \sqrt{2} t - \frac{3}{2\sqrt{2}} \log t + O_\P(1)$ by~\eqref{eqn:bramson}. We expect that for large times $t$, the convex hull of the set of occupied positions at time $t$ looks like a ball of radius $\sqrt{2} t - \frac{3}{2\sqrt{2}} \log t$, with spikes of height $\frac{d-1}{2\sqrt{2}} \log t$, tossed uniformly at random on the surface of the ball.
\end{remark}

The asymptotic behaviour of $R_t$ may be decomposed as follows:
\[
  R_t = \sqrt{2} t - \frac{3}{2\sqrt{2}} \log t + \frac{d-1}{2\sqrt{2}} \log t + O_\P(1).
\]
The term $-\frac{3}{2\sqrt{2}} \log t$ comes from the branching structure of the process. It is linked to the exponent of decay of the probability for a Brownian motion to make an excursion of length $t$ above 0\footnote{This is underlined in \cite{AiS10} for the closely related model of the branching random walk.}. The term $\frac{d-1}{2\sqrt{2}} \log t$ comes from the fact that the frontier of the $d$-dim. BBM is the $d-1$ dimensional sphere of radius $O(t)$. There are $O(t^{(d-1)/2})$ ``distinct'' directions that can be followed to reach the maximal displacement at time $t$., yielding a term similar to the maximum of $O(t^{(d-1)/2})$ independent exponential random variables with parameter $\sqrt{2}$.

The rest of the paper is organised as follows. In Section \ref{sec:lemmas}, we introduce the celebrated many-to-few lemma. We also recall some Brownian motion and geometric estimates, that precise the rough picture presented in Remark \ref{rem:main}. Section~\ref{sec:upperbound} is then devoted to the proof of the upper bound of \eqref{eqn:main}, that comes from a frontier argument; and Section~\ref{sec:lowerbound} to its lower bound, using second moment methods. We end Section~\ref{sec:lowerbound} completing the proof of Theorem \ref{thm:main}.

\section{Preliminary lemmas}
\label{sec:lemmas}

\subsection{The many-to-few lemmas}

Let $((X_t(u), u \in \calN_t),t \geq 0)$ be $d$-dim BBM. The many-to-one lemma links the mean of an additive function of the branching Brownian motion with a Brownian motion estimate. Its origins can be tracked back to the works of Peyrière \cite{Pey74} and Kahane and Peyrière \cite{KaP76}. This lemma has been enhanced, through the so-called spinal decomposition and stopping lines theory. However we only need in this article a simple version, corollary of \cite[Lemma 1]{HaR14}.

\begin{lemma}[Many-to-one lemma]
\label{lem:many-to-one}
For any $t \geq 0$ and measurable positive function $f$, we have
\[
  \E\left[ \sum_{u \in \calN_t} f(X_s(u), s \leq t) \right] = e^t \E\left[f(B_s, s \leq t) \right],
\]
where $B$ is a $d$-dimensional Brownian motion.
\end{lemma}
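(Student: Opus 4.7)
The plan is to decouple the genealogical tree from the spatial displacements and then compute the two factors separately. Concretely, construct the $d$-dim BBM as follows: first sample the Yule genealogy $\calT$ (a rate-$1$ binary branching tree) recording the branching times, then attach independent $d$-dimensional Brownian motions on each edge of $\calT$, and finally define $X_s(u)$ for $u \in \calN_t$ and $s \leq t$ as the concatenation along the ancestral line of $u$ of the Brownian displacements accumulated up to time $s$. This construction obviously realises the $d$-dim BBM described in the introduction, and by design the displacements are independent of $\calT$.

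The key observation is then that for any fixed $u \in \calN_t$, the trajectory $(X_s(u), s \leq t)$ obtained by gluing the independent Brownian pieces along the spine of $u$ is itself distributed as a $d$-dimensional Brownian motion $(B_s, s \leq t)$, and this remains true conditionally on $\calT$ (since the branching times of $\calT$ are merely deterministic cut-points once $\calT$ is given, and a $d$-dimensional Brownian motion conditioned on the value of its increments at a finite deterministic sequence of times is still a $d$-dimensional Brownian motion). Consequently, for every measurable positive $f$,
\[
  \E\bigl[f(X_s(u), s \leq t) \mid \calT\bigr] = \E[f(B_s, s \leq t)], \qquad u \in \calN_t.
\]

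Summing over $u \in \calN_t$ (a $\calT$-measurable set) and then taking expectations yields
\[
  \E\left[ \sum_{u \in \calN_t} f(X_s(u), s \leq t) \right] = \E[|\calN_t|] \cdot \E[f(B_s, s \leq t)].
\]
It remains to identify $\E[|\calN_t|]$. Writing $m(t) := \E[|\calN_t|]$ and using the branching property at the first reproduction time $T \sim \mathrm{Exp}(1)$, one obtains
\[
  m(t) = e^{-t} + \int_0^t e^{-s} \cdot 2 m(t-s)\, ds,
\]
whose unique solution is $m(t) = e^{t}$; equivalently, $|\calN_t|$ is a Yule process of parameter $1$ whose mean is well known to equal $e^{t}$. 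Plugging this in gives the claimed identity.

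There is no real obstacle here: the only conceptual point is the spinal identification of the concatenated Brownian pieces with a single $d$-dimensional Brownian motion, which is immediate from the strong Markov property and the independence built into the construction. Everything else reduces to Fubini and the elementary computation of the mean of a Yule process.
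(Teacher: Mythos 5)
Your proof is correct, but it takes a different route from the paper: the paper offers no proof at all for this lemma, instead invoking it as a direct corollary of Lemma~1 of Harris and Roberts (the many-to-few framework, based on a spinal decomposition and a change of measure). Your argument is more elementary and self-contained: it exploits the fact that for this particular model the genealogy (a rate-$1$ Yule tree) is independent of the displacements, so that conditionally on the tree each ancestral path is exactly a $d$-dimensional Brownian motion, and the identity reduces to $\E[\#\calN_t]=e^t$. This factorisation is special to branching Brownian motion with branching rate independent of position and no displacement at branch points; the spinal-decomposition route cited in the paper is more robust (it handles position-dependent branching, random offspring numbers, and, crucially for this paper, delivers the many-to-two lemma of the next statement in the same stroke, whereas extending your conditioning argument to second moments requires additionally conditioning on the most recent common ancestor and is noticeably heavier). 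Two minor points worth tightening: the uniqueness claim for the renewal equation $m(t)=e^{-t}+\int_0^t 2e^{-s}m(t-s)\,ds$ should be justified within the class of locally bounded solutions (or bypassed entirely by noting $\#\calN_t$ is geometric with parameter $e^{-t}$, as you do); and the summation over the random set $\calN_t$ is cleanest if you sum over all Ulam--Harris labels $u$ weighted by the $\calT$-measurable indicator $\ind{u\in\calN_t}$, which makes the appeal to Tonelli immediate since $f\geq 0$.
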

This lemma is used to bound the mean number of individuals belonging to certain specific sets. To bound from below the probability for a set of individuals to be non-empty, we compute some second moments. This result, sometimes called in the literature the many-to-two lemma is also a consequence of \cite[Lemma 1]{HaR14}.

\begin{lemma}[Many-to-two lemma]
\label{lem:many-to-two}
Let $B$ and $B'$ be two independent $d$-dimensional Brownian motions. For $s \geq 0$ we set $W^{(s)} : t \in [0,+\infty) \mapsto B_{t \wedge s} + B'_{(t-s)_+}$, a Brownian path identical to $B$ until time $s$ and with independent increments afterwards. For any measurable positive functions $f,g$ and $t \geq 0$, we have
\begin{multline*}
  \E\left[ \left(\sum_{u \in \calN_t} f(X_s(u), s \leq t)\right) \left( \sum_{u \in \calN_t} g(X_s(u), s \leq t) \right) \right]\\
  = \E\left[ \sum_{u \in \calN_t} f(X_s(u), s \leq t) g(X_s(u), s \leq t)\right] 
  + \int_0^t e^{2t-s} \E\left[ f(B_u, u \leq t) g(W^{(s)}_u, u \leq t) \right].
\end{multline*}
\end{lemma}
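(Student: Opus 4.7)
The plan is to expand the product of sums into a diagonal contribution (both sums indexing the same particle) and an off-diagonal one (distinct particles), then to classify ordered pairs of distinct particles by their most recent common ancestor (MRCA). Writing $f(u)$ as shorthand for $f(X_s(u), s \leq t)$ and similarly for $g$, I would begin from
\begin{equation*}
\left(\sum_{u \in \calN_t} f(u)\right)\left(\sum_{u \in \calN_t} g(u)\right) = \sum_{u \in \calN_t} (fg)(u) + \sum_{\substack{u,v \in \calN_t\\ u \neq v}} f(u) g(v).
\end{equation*}
Taking expectation, the diagonal sum is precisely the first term on the right-hand side of the lemma.

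For the off-diagonal sum, every ordered pair $(u,v)$ with $u \neq v$ has a unique MRCA $w$ that dies at some time $\sigma = T_w \in (0, t]$, giving rise to two children, one of which is the ancestor of $u$ and the other of $v$. Conditioning on $\sigma$ and on the path $(X_r(w))_{r \leq \sigma}$, the branching Markov property tells me that the two subtrees rooted at $w$'s children are independent $d$-dim.\ BBMs started at $X_\sigma(w)$ at time $\sigma$ and run for the remaining duration $t - \sigma$. Applying Lemma~\ref{lem:many-to-one} inside each subtree, the conditional expectation of $\sum f$ over one of them equals $e^{t-\sigma}\E[f(B) \mid \calF_\sigma]$, where $B$ extends the spine path by an independent Brownian increment on $[\sigma, t]$; the other subtree contributes analogously, with its independent Brownian extension playing the role of $W^{(\sigma)}$. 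Conditional independence of the two extensions makes the product of the two conditional expectations factor as $e^{2(t-\sigma)}\E[f(B)g(W^{(\sigma)}) \mid \calF_\sigma]$ per branching event, the two orderings of the pair being absorbed by the law-symmetry $B \leftrightarrow W^{(\sigma)}$.

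To conclude, I would integrate this contribution against the intensity of branching events. A Campbell-type formula---which follows from Lemma~\ref{lem:many-to-one} applied to $\calN_\sigma$ combined with the fact that each living particle branches at rate $1$---says that the joint intensity of $\sigma \in (0, t]$ and of its associated spine path is $e^\sigma\,d\sigma$ against the law of a $d$-dim.\ Brownian motion on $[0,\sigma]$. Combining with the per-event factor $e^{2(t-\sigma)}$ yields the integral $\int_0^t e^{2t-s}\E[f(B)g(W^{(s)})]\,ds$ claimed in the lemma.

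The main obstacle is making the Campbell-type intensity formula for branching events rigorous. The cleanest route is to invoke the two-spine version of \cite[Lemma~1]{HaR14} directly, exactly as the paper does; a self-contained alternative is a short induction on the number of branching events before time $t$, exploiting that between branchings the particles evolve as independent Brownian motions and that the lifetimes are i.i.d.\ $\mathrm{Exp}(1)$.
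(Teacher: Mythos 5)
The paper gives no proof of this lemma at all --- it is quoted as a direct consequence of \cite[Lemma 1]{HaR14} --- so your derivation via the MRCA decomposition is a genuinely different, self-contained route. Its architecture is the standard one and is sound in outline: split off the diagonal term, classify ordered off-diagonal pairs by the branching event of their most recent common ancestor, use the branching Markov property to get conditional independence of the two subtrees, apply Lemma~\ref{lem:many-to-one} inside each subtree, and integrate against the intensity $e^s\,ds$ of branching events (which is indeed justified by $\E[\#\calN_s]=e^s$ and unit branching rate).

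There is, however, a concrete gap at the sentence claiming that ``the two orderings of the pair \emph{[are]} absorbed by the law-symmetry $B\leftrightarrow W^{(\sigma)}$.'' The two orderings are not absorbed; they are \emph{equal} and therefore \emph{add}. For a branching event at time $s$, the ordered pairs $(u,v)$ it separates split into those with $u$ descending from the first child and $v$ from the second, and the reverse; each family contributes $e^{2(t-s)}\E\left[f(B_u,u\le t)\,g(W^{(s)}_u,u\le t)\,\middle|\,\calF_s\right]$ (the second after the exchange $B\leftrightarrow W^{(s)}$, which leaves the joint law invariant), so the per-event contribution is \emph{twice} that quantity and your argument actually produces $2\int_0^t e^{2t-s}\E\left[f(B_u,u\le t)g(W^{(s)}_u,u\le t)\right]ds$. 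The test case $f=g=1$ settles the constant unambiguously: the left-hand side is $\E\left[(\#\calN_t)^2\right]=2e^{2t}-e^t$ (as $\#\calN_t$ is geometric with parameter $e^{-t}$), whereas the displayed right-hand side evaluates to $e^t+\int_0^t e^{2t-s}\,ds=e^{2t}$. So the identity as printed is itself missing this factor $2$ --- harmless for the paper, which only ever uses the lemma up to a multiplicative constant $C$ --- but a proof of the stated \emph{equality} cannot pass over the orderings at that step; done carefully, your argument proves the version with the factor $2$, not the displayed one.
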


\subsection{Ballot theorem for the Brownian motion}

We recall in this section some well-known Brownian motion estimates. Let $\beta$ be a standard Brownian motion on $\R$. The quantity $I_t = \inf_{s \leq t} \beta_s$ has the same law as $-|\beta_t|$. Consequently there exists $C>0$ such that for any $t \geq 1$ and $y \geq 1$,
\begin{equation}
  \label{eqn:ballot}
  \frac{y \wedge t^{1/2}}{C t^{1/2}} \leq \P(\beta_s \geq -y, s \leq t) \leq \frac{C(y \wedge t^{1/2})}{t^{1/2}}.
\end{equation}
We often call this equation the ballot theorem, for its similarities with the well-known random walk estimate (see \cite{ABR08}).

Using the Girsanov theorem, it is an easy exercise to prove that for any $A \in \R$ and $\alpha < 1/2$, there exists $C>0$ such that for all $t \geq 1$ and $y \geq 1$,
\begin{equation}
  \label{eqn:ballotBended}
  \frac{y \wedge t^{1/2}}{Ct^{1/2}} \leq \P(\beta_s \geq -y + A s^\alpha, s \leq t) \leq \frac{C(y \wedge t^{1/2})}{t^{1/2}}.
\end{equation}

These estimates can be used to compute the probability for a Brownian motion to make an excursion above a given curve. Dividing the Brownian path on $[0,t]$ into three pieces, the first and last pieces being Brownian motion that stay above a given path, the middle part joining these two pieces, we obtain the following result\footnote{For a similar computation for random walks, see e.g. \cite{AiS10,Mal14a}.}. For any $A>0$ and $\alpha < 1/2$, there exists $C>0$ such that for any $t \geq 1$, for any function $f$ satisfying
\begin{equation}
  \label{eqn:propFunction}
  \sup_{s \leq t} \frac{|f(s)|}{s^\alpha}+\frac{|f(t)-f(s)|}{(t-s)^\alpha} < A,
\end{equation}
(which implies in particular $f(0)=f(t)=0$) and for any $y,z \geq 1$, we have
\begin{equation}
  \label{eqn:excursionBended}
  \frac{(y\wedge t^{1/2})(z \wedge t^{1/2})}{Ct^{3/2}} \leq \P\left(\begin{array}{l} \beta_s \geq -y + f(s), s \leq t\\ \beta_t + y - f(t) \in [z,z+1]\end{array} \right) \leq \frac{C(y \wedge t^{1/2}) (z \wedge t^{1/2})}{t^{3/2}}.
\end{equation}

\subsection{Geometry estimates}

We conclude the section with an observation on the geometry of the sphere
\[
  \S^{d-1} = \left\{ (x_1,\ldots x_d) \in \R^d : \norm{x} := x^2_1 + \ldots + x_d^2 = 1 \right\}.
\]
Using the fact that this is a manifold of dimension $d-1$, we are able to exhibit $t^{(d-1)/2}$ ``distinct'' directions on the sphere of radius $t$.

\begin{lemma}
\label{lem:geoUpperbound}
There exists $C>0$ such that for any $R>1$, there exists $\calU(R) \subset \S^{d-1}$ verifying $\# \calU(R) \leq C R^{(d-1)/2}$ and
\begin{equation}
  \label{eqn:def}
  \left\{ x \in \R^d : \norm{x} \geq R \right\} \subset  \bigcup_{ v \in \calU(R)} \left\{ x \in \R^d : x.v \geq R-1 \right\}.
\end{equation}
\end{lemma}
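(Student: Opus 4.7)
The plan is to reduce the inclusion (\ref{eqn:def}) to a covering of the unit sphere $\S^{d-1}$ by spherical caps of a carefully chosen angular radius, and then to invoke a standard packing argument to bound the number of caps.

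For the reduction, if $\norm{x} \geq R$ and $v \in \S^{d-1}$ satisfies $(x/\norm{x}) \cdot v \geq 1 - 1/R$, then
\[
  x \cdot v = \norm{x}\left(\frac{x}{\norm{x}} \cdot v\right) \geq \norm{x}\left(1 - \frac{1}{R}\right) = \frac{(R-1)\norm{x}}{R} \geq R-1,
\]
where the last inequality uses $\norm{x} \geq R$. Hence it is enough to exhibit $\calU(R) \subset \S^{d-1}$ such that the geodesic balls of angular radius $\theta_R := \arccos(1 - 1/R)$ centred at the points of $\calU(R)$ cover the whole sphere. A Taylor expansion of $\arccos$ near~$1$ gives $\theta_R \leq C_0 R^{-1/2}$ uniformly for $R \geq 1$, so the problem reduces to building a $C_0 R^{-1/2}$-net on $\S^{d-1}$ in the geodesic metric.

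For the covering, I take $\calU(R)$ to be a maximal $\theta_R$-packing of $\S^{d-1}$: a maximal collection of points whose pairwise geodesic distances are strictly greater than $\theta_R$. Maximality guarantees that every unit vector lies within geodesic distance $\theta_R$ of some $v \in \calU(R)$, which is exactly the covering property required above. To bound $\#\calU(R)$, the geodesic balls of radius $\theta_R/2$ around distinct elements of $\calU(R)$ are pairwise disjoint subsets of $\S^{d-1}$, each has $(d-1)$-dimensional Hausdorff measure of order $\theta_R^{d-1}$ with multiplicative constants depending only on $d$, and their total measure is bounded by the surface area of $\S^{d-1}$, itself a constant depending only on $d$. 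Comparing measures yields $\#\calU(R) \leq C \theta_R^{-(d-1)} \leq C R^{(d-1)/2}$.

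No step is particularly delicate; the only point requiring genuine care is the elementary translation of the half-space condition $x \cdot v \geq R-1$ for $\norm{x} \geq R$ into an angular condition on $\S^{d-1}$, after which the exponent $(d-1)/2$ emerges directly from the dimension $d-1$ of the sphere combined with the angular scale $R^{-1/2}$.
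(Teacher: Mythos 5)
Your proposal is correct and follows essentially the same route as the paper: reduce the statement to covering $\S^{d-1}$ by caps of angular radius of order $R^{-1/2}$ and count $O(R^{(d-1)/2})$ of them, the only difference being that you spell out the standard maximal-packing/volume-comparison argument that the paper merely cites as a known covering fact. One small correction: for the final step $\#\calU(R) \leq C \theta_R^{-(d-1)} \leq C R^{(d-1)/2}$ you need the \emph{lower} bound $\theta_R = \arccos(1-1/R) \geq \sqrt{2/R}$ (which does hold, since $\cos \theta \geq 1 - \theta^2/2$), not the upper bound $\theta_R \leq C_0 R^{-1/2}$ that you stated.
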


\begin{proof}
Let $R>1$ and $x \in \R^d$ such that $\norm{x} \geq R$. We set $y = R \frac{x}{\norm{x}}$ its projection on the sphere of radius $R$. We note that for any $v \in \S^{d-1}$, if $y.v \geq R-1>0$ then $x.v \geq R-1$.

Therefore, it is enough to prove there exists $\calU(R)$ such that
\begin{equation}
  \label{eqn:defprop}
  \left\{ x \in \R^d : \norm{x} = R \right\} \subset \bigcup_{ v \in \calU(R)} \left\{ x \in \R^d: \norm{x} = R, x.v \geq R-1 \right\}.
\end{equation}
Let $v \in \S^{d-1}$ and $x \in \R^d$, we have $\norm{x - Rv}^2 = \norm{x}^2 - 2 R x.v + R^2$, thus
\[
  \left\{ x \in \R^d : \norm{x} = R, x . v \geq R-1 \right\}
  = \left\{ x \in \R^d :  \norm{x} = R, \norm{x - R.v} \leq \sqrt{2R} \right\}.
\]

\begin{figure}[ht]
\centering
\begin{tikzpicture}[scale=1.6]
  \fill [color=black!20] (0,1.7) ellipse (1.05 and 0.1);
  \fill [color=black!20] (1.05,1.7) arc (58.2:121.8:2) -- cycle;
  \draw [thick] (-2,0) arc (180:0:2);
  \draw [thick, densely dotted] (-2,0) arc (180:190:2);
  \draw [thick, densely dotted] (2,0) arc (0:-10:2);
  \draw [color = black!70, dashed] (1.04,1.7) arc (0:180:1.04 and 0.1);
  \draw (1.04,1.7) arc (0:-180:1.04 and 0.1);
  \draw [dotted] (0,0) node {$\scriptscriptstyle \bullet$} -- (0,1.7) node {$\scriptscriptstyle\bullet$} -- (0,2) node {$\scriptscriptstyle\bullet$};
  \draw [<->] (0,1.71) -- (0,1.99);
  \draw [dotted] (0,1.7) -- (1.05,1.7) node {$\scriptscriptstyle\bullet$};
  \draw [->] (0,0) -- (0,0.3) node[right] {$v$};
  \draw (0,1.8) node [left] {$1$};
  \draw [<->] (0,1.5) -- (1.05,1.5);
  \draw (0.5,1.5) node [below] {$\sqrt{2R-1}$};
  \draw [->] (0,0) -- (1.936,0.5);
  \draw (0.95,0.25) node [above] {$R$};
\end{tikzpicture}
\caption{It takes $R^{(d-1)/2}$ spherical caps of height 1 to cover a sphere of radius $R$.}
\end{figure}
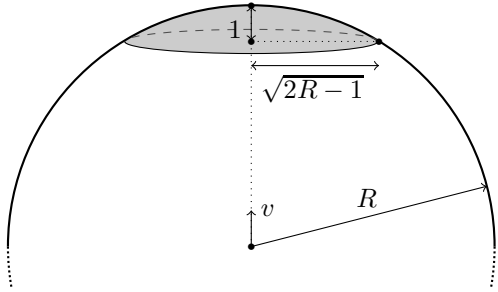

By homothetic transformation of ratio $R^{-1}$, a set $\calU(R)$ satisfying \eqref{eqn:defprop} satisfies
\[
  \S^{d-1} = \bigcup_{v \in \calU(R)} \left\{ u \in \S^{d-1} : \norm{v-u} \leq \sqrt{2/R} \right\}.
\]
As there exists $C>0$ such that for any $\epsilon>0$ small enough, the sphere $\S^{d-1}$ can be paved by $O(\epsilon^{1-d})$ balls of radius $\epsilon$, we write $\calU(R)$ for the set of the center of such a tiling with balls of radius $\sqrt{2/R}$.
\end{proof}

To explicitly construct a set $\calU(R)$ solution of \eqref{eqn:def}, one can take the union of the image of $R^{-1/2} \Z^d$ by the stereographic projection of the northern and the southern hemisphere of $\S^{d-1}$, which is a Lipschitz bijective mapping.

\section{The upper bound of Theorem \ref{thm:main}}
\label{sec:upperbound}

Let $(X_t(u), u \in \calN_t)_{t \geq 0}$ be a $d$-dim. BBM. We obtain in this section an upper bound for the tail of $R_t = \max_{u \in \calN_t} \norm{X_t(u)}$. We prove that for any $t \geq 1$ and $y \geq 1$, with high probability, there exists no individual exiting at some time $s \leq t$ the ball of radius
\begin{equation}
  \label{eqn:defFrontier}
  f^{t,y}_s = \sqrt{2}s + \frac{d-1}{2\sqrt{2}} \log (s+y) - \frac{3}{2\sqrt{2}} \log \frac{t+1}{t-s+1} + y.
\end{equation}
We set $\tilde{f}^{t,y}_s = f^{t,y}_s - \sqrt{2}s$. We observe that for any $\alpha < 1/2$ there exists $A>0$ such that for any $t \geq 1$, $(\tilde{f}^{t,y}_s - \tilde{f}^{t,y}_0, s \leq t)$ satisfies \eqref{eqn:propFunction}. The main result of the section is the following.
\begin{lemma}
\label{lem:frontier}
There exists $C>0$ such that for any $t \geq 1$ and $y \in [1,t^{1/2}]$,
\[
  \P\left[ \exists u \in \calN_t, \exists s\leq t : \norm{X_s(u)} \geq f^{t,y}_s \right] \leq C y e^{-\sqrt{2}y}.
\]
\end{lemma}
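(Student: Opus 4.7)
My plan is to combine a time-discretization, the spherical cover from Lemma~\ref{lem:geoUpperbound}, the many-to-one Lemma~\ref{lem:many-to-one}, and a Girsanov shift by $\sqrt{2}\,\cdot$, ultimately reducing to the bended excursion estimate \eqref{eqn:excursionBended}. The guiding heuristic is that the number $O((s+y)^{(d-1)/2})$ of directions needed to cover a sphere of radius $\sqrt{2}s$ is exactly compensated by the factor $(s+y)^{-(d-1)/2}$ that the term $\frac{d-1}{2\sqrt{2}}\log(s+y)$ of $f^{t,y}_s$ produces once it is exponentiated against $-\sqrt{2}$ through the Girsanov shift.

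\textbf{Discretization.}
I would write the event of interest as $\bigcup_{n=1}^{\lceil t\rceil} G_n$, where $G_n$ is the sub-event that some particle crosses the frontier for the first time during $[n-1,n]$. The definition of ``first'' gives a path constraint along the genealogical line of the crossing particle: every strict ancestor $u'$ at time $s' < n-1$ satisfies $X_{s'}(u')\cdot v < f^{t,y}_{s'}$ for every $v \in \S^{d-1}$. A short computation of its derivative shows that $f^{t,y}_s$ is non-decreasing on $[0,t]$, so Lemma~\ref{lem:geoUpperbound} applied with $R = f^{t,y}_{n-1}$ yields $\calU_n \subset \S^{d-1}$ with $|\calU_n| \leq C(n+y)^{(d-1)/2}$ and $G_n \subset \bigcup_{v \in \calU_n} G_{n,v}$, where $G_{n,v}$ further asks that $X_s(u)\cdot v \geq f^{t,y}_{n-1} - 1$ for some $s \in [n-1,n]$.

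\textbf{One-dimensional estimate.}
Markov's inequality together with Lemma~\ref{lem:many-to-one} then bounds $\P(G_{n,v})$ by $e^n$ times the probability that a one-dimensional standard Brownian motion $\beta$ stays below $f^{t,y}_{s'}$ on $[0,n-1]$ and has $\sup_{s' \in [n-1,n]}\beta_{s'} \geq f^{t,y}_{n-1} - 1$; to get the sharp path constraint on $\beta$ from the $d$-dimensional ancestral constraint one merely bounds the projection by the norm. Shifting via Girsanov by $\sqrt{2}\,\cdot$ rewrites this as $\tilde{\E}[e^{-\sqrt{2}W_n}\ind{A'}]$, with $W$ a standard BM under $\tilde{\P}$ that must stay below $\tilde{f}^{t,y}_{s'}$ on $[0,n-1]$ and reach height $\tilde{f}^{t,y}_{n-1}-O(1)$ on $[n-1,n]$. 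A dyadic decomposition on the overshoot $W_n - \tilde{f}^{t,y}_n$, combined with the bended excursion estimate \eqref{eqn:excursionBended}---applicable since $\tilde{f}^{t,y}-\tilde{f}^{t,y}_0$ satisfies \eqref{eqn:propFunction}---then gives
\[
\P(G_{n,v}) \;\leq\; C\,\frac{y\,e^{-\sqrt{2}y}}{n^{3/2}\,(n+y)^{(d-1)/2}}\left(\frac{t+1}{t-n+1}\right)^{3/2}.
\]
Summing over $v \in \calU_n$ cancels the $(n+y)^{(d-1)/2}$ in the denominator, and splitting $\sum_n n^{-3/2}\bigl((t+1)/(t-n+1)\bigr)^{3/2}$ at $n=\lceil t/2\rceil$ shows this sum is $O(1)$, producing the claimed bound $Cye^{-\sqrt{2}y}$.

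\textbf{Main obstacle.}
The delicate step is the combined Girsanov/dyadic bookkeeping: one must simultaneously extract from the exponential tilt the factor $e^{-\sqrt{2}\tilde{f}^{t,y}_n}$---equivalently, $e^{-\sqrt{2}y}$ times the correct logarithmic corrections in $n$ and $t-n$---and invoke \eqref{eqn:excursionBended} sharply enough to recover the $y/n^{3/2}$ first-passage decay. Secondary technicalities---the edge cases $n\approx 0$ and $n\approx t$ where $\tilde{f}^{t,y}$ is less well-behaved, and the replacement of $\beta_{n-1}$ by $\sup_{s'\in[n-1,n]}\beta_{s'}$, handled via an independent Gaussian fluctuation with Gaussian tails---are routine once the main estimate is in place.
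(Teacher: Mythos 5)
Your proposal is correct and follows essentially the same route as the paper's proof: a first-crossing decomposition over unit time intervals, the spherical cover of Lemma~\ref{lem:geoUpperbound} costing $O((n+y)^{(d-1)/2})$ directions which is exactly repaid by the factor $(n+y)^{-(d-1)/2}$ extracted from $e^{-\sqrt{2}\tilde{f}^{t,y}_n}$ after the Girsanov tilt, the many-to-one lemma, and the bended excursion estimate \eqref{eqn:excursionBended}, with the sum $\sum_n (t+1)^{3/2}(n+1)^{-3/2}(t-n+1)^{-3/2}=O(1)$ closing the argument. The only cosmetic differences are that the paper applies the many-to-one lemma before the directional union bound and controls the intra-interval supremum through the Gaussian concentration of $\sup_{s\leq 1}\norm{B_s}$ rather than your dyadic overshoot decomposition.
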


\begin{proof}
Let $t \geq 1$ and $y \in [1, t^{1/2}]$. To simplify notation, we assume that $t$ is an integer. For any $0 \leq k \leq t-1$, we set 
\[
  Z^{(t)}_k(y) = \sum_{u \in \calN_{k+1}} \ind{\exists s \in [k,k+1] : \norm{X_s(u)} \geq f^{t,y}_s} \ind{\norm{X_s(u)} \leq f^{t,y}_s, s \leq k}.
\]
By the Markov inequality and the many-to-one lemma, we have
\begin{multline}
  \P\left[ \exists s \leq t, u \in \calN_t : \norm{X_s(u)} \geq f^{t,y}(s) \right]\\
  \leq \sum_{k=0}^{t-1} \E\left(Z^{(t)}_k(y)\right) \leq \sum_{k=0}^{t-1} e^{k+1} \P\left[ \norm{B_s}\leq f^{t,y}_s , s \leq k,\exists r \in [k,k+1] : \norm{B_r} \geq f^{t,y}_r  \right] \label{eqn:domination},
\end{multline}
where $B$ is a $d$-dimensional Brownian motion.

As $s \mapsto f^{t,y}_s$ is increasing, applying the Markov property at time $k$ we have
\[ \P\left[ \norm{B_s}\leq f^{t,y}_s, s \leq k, \exists r \in [k,k+1] : \norm{B_r} \geq f^{t,y}_r \right] \leq \E\left(\phi_k\left(\sup_{s \leq 1} \norm{B_s}\right) \right),
\]
where $\phi_k : x \in [0,+\infty) \mapsto P\left[ \norm{B_k} \geq f^{t,y}_k - x-1, \norm{B_s} \leq f^{t,y}_s, s \leq k  \right]$. We now bound $\phi_k$ from above using Lemma \ref{lem:geoUpperbound}. There exists $C>0$ such that for any $k \leq t$ and $x \geq 0$, we have
\begin{align*}
  \phi_k(x) 
  &\leq \sum_{v \in \calU(f^{t,y}_k - x+1)} \P\left( B_k.v \geq f^{t,y}_k - x-1, B_s.v \leq f^{t,y}_s, s \leq k  \right)\\
  &\leq C \left( 1 + (f^{t,y}_k - x)_+\right)^{(d-1)/2} \P\left( \beta_k \geq f^{t,y}_k - x-1, \beta_s \leq f^{t,y}_s, s \leq k \right)\\
  &\leq C(1 + k + y)^{(d-1)/2}\P\left( \beta_k \geq f^{t,y}_k - x-1, \beta_s \leq f^{t,y}_s, s \leq k \right),
\end{align*}
where $\beta$ is a standard Brownian motion, that has the same law as $B.v$ for any $v \in \S^{d-1}$. Using the Girsanov transform then \eqref{eqn:excursionBended}, we have
\begin{align*}
  \P\left( \beta_k \geq f^{t,y}_k - x-1, \beta_s \leq f^{t,y}_s, s \leq k \right)
  &= \E\left[ e^{-\sqrt{2} \beta_k - k} \ind{\beta_k + \sqrt{2}k \geq f^{t,y}_k-x-1, \beta_s + \sqrt{2}s \leq f^{t,y}_s, s \leq k} \right]\\
  &\leq C e^{-k} e^{\sqrt{2} (x-\tilde{f}^{t,y}_k)} \P\left( \beta_k \geq \tilde{f}^{t,y}_k - x - 1, \beta_s \leq \tilde{f}^{t,y}_s, s \leq k \right)\\
  &\leq Ce^{-k} \frac{(t+1)^{3/2} e^{\sqrt{2} (x-y)}}{(k+y)^{(d-1)/2}(t-k+1)^{3/2}} \frac{(1+x)(y+\log y)}{(k+1)^{3/2}}.
\end{align*}

We conclude that for any $x \geq 0$ and $k \leq t$,
\[
  \phi_k(x) \leq C (1+x) y e^{-k} e^{\sqrt{2}(x-y)} \frac{(t+1)^{3/2}}{(k+1)^{3/2}(t-k+1)^{3/2}}.
\]
As $\sup_{s \leq 1} \norm{B_s}$ has Gaussian concentration, \eqref{eqn:domination} yields
\begin{equation*}
  \P\left[ \exists s \leq t, u \in \calN_t : \norm{X_s(u)} \geq f^{t,y}(s) \right]
  \leq C y e^{-\sqrt{2}y} \sum_{k=0}^{t-1} \frac{(t+1)^{3/2}}{(k+1)^{3/2}(t-k+1)^{3/2}}
  \leq C y e^{-\sqrt{2}y}.
\end{equation*}
\end{proof}

\begin{remark}
From the proof of Lemma \ref{lem:frontier}, we obtain that the mean number of individuals hitting the frontier $f^{t,y}$ between times $k$ and $t-k$ is bounded from above by $\frac{C y e^{-\sqrt{2}y}}{k^{1/2}}$.
\end{remark}

Using Lemma \ref{lem:frontier}, we bound from above the tail of the maximal displacement at time $t$.
\begin{lemma}
\label{lem:tailUpperbound}
There exists $C>0$ such that for any $t \geq 1$ and $y \in [1,t^{1/2}]$,
\[
  \P\left(R_t \geq \sqrt{2}t + \frac{d-4}{2\sqrt{2}} \log t + y \right) \leq C y e^{-\sqrt{2} y}.
\]
\end{lemma}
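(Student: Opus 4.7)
The plan is to deduce this lemma almost immediately from Lemma \ref{lem:frontier} by evaluating the frontier $f^{t,y}$ at the endpoint $s=t$ and identifying it, up to an $O(1)$ shift in $y$, with the target threshold $\sqrt{2}t + \frac{d-4}{2\sqrt{2}} \log t + y$.

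First I would compute
\[
  f^{t,y}_t = \sqrt{2}t + \frac{d-1}{2\sqrt{2}} \log (t+y) - \frac{3}{2\sqrt{2}} \log (t+1) + y,
\]
and observe that for $y \in [1,t^{1/2}]$ one has $\log(t+y) = \log t + O(1)$ and $\log(t+1) = \log t + O(1)$. Collecting the logarithmic terms gives $\frac{d-1-3}{2\sqrt{2}} \log t = \frac{d-4}{2\sqrt{2}} \log t$, so there exists a constant $C_0 > 0$, independent of $t$ and $y$, such that
\[
  f^{t,y}_t \leq \sqrt{2}t + \frac{d-4}{2\sqrt{2}} \log t + y + C_0, \qquad t \geq 1,\ y \in [1,t^{1/2}].
\]

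Next I would use this to realise the event $\{R_t \geq \sqrt{2}t + \frac{d-4}{2\sqrt{2}} \log t + y\}$ as a sub-event of a frontier-crossing event. Assuming $y \geq C_0 + 2$, set $y' = y - C_0 - 1 \in [1,t^{1/2}]$. By the displayed inequality above,
\[
  \sqrt{2}t + \frac{d-4}{2\sqrt{2}} \log t + y \geq f^{t,y'}_t + 1,
\]
so $R_t \geq \sqrt{2}t + \frac{d-4}{2\sqrt{2}} \log t + y$ implies the existence of $u \in \calN_t$ with $\norm{X_t(u)} \geq f^{t,y'}_t$; in particular there exists some $s \leq t$ and some $u \in \calN_t$ with $\norm{X_s(u)} \geq f^{t,y'}_s$. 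Lemma \ref{lem:frontier} applied with parameter $y'$ then yields
\[
  \P\!\left(R_t \geq \sqrt{2}t + \frac{d-4}{2\sqrt{2}} \log t + y\right) \leq C y' e^{-\sqrt{2} y'} = C(y - C_0 - 1) e^{\sqrt{2}(C_0+1)} e^{-\sqrt{2} y} \leq C' y e^{-\sqrt{2} y}.
\]

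Finally, for the remaining range $y \in [1, C_0 + 2]$ the bound is trivial: $y e^{-\sqrt{2} y}$ is bounded below by a positive constant on this interval, so enlarging $C'$ if necessary makes the inequality $\P(\cdot) \leq C' y e^{-\sqrt{2} y}$ valid just because the left-hand side is at most $1$. I do not foresee a real obstacle here since Lemma \ref{lem:frontier} does all the work; the only care needed is to track how the absorbed constants interact with the $y e^{-\sqrt{2} y}$ tail so that the final constant $C$ is uniform in $t$ and $y$.
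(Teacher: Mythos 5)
Your proposal is correct and follows essentially the same route as the paper: evaluate the frontier $f^{t,\cdot}_t$ at the endpoint, note it coincides with the target threshold up to a bounded shift of the parameter $y$, and invoke Lemma \ref{lem:frontier} at the shifted level (the paper uses a fixed shift of $5$ for $y\in[10,t^{1/2}]$ where you use $C_0+1$, and you make the trivial small-$y$ case explicit). The only cosmetic difference is that you track the absorbed constants a bit more carefully, which does no harm.
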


\begin{proof}
Let $t \geq 1$ and $y \in [1,t^{1/2}]$. By continuity of the paths followed by the individuals we have
\[
  \P(R_t \geq f^{t,y}_t) \leq \P\left(\exists u \in \calN_t : X_t(u) \geq f^{t,y}_t \right) \leq \P\left(  \exists u \in \calN_t, \exists s \leq t: X_s(u) \geq f^{t,y}_s \right),
\]
consequently Lemma \ref{lem:frontier} yields $\P(R_t \geq f^{t,y}_t) \leq C y e^{-\sqrt{2}y}$.

As $f^{t,y}_t = \sqrt{2}t + \frac{d-1}{2 \sqrt{2}} \log (t+y) - \frac{3}{2\sqrt{2}} \log (t + 1) + y$, for any $t \geq 1$ large enough, for any $y \in [10,t^{1/2}]$, we have $f^{t,y-5}_t \leq \sqrt{2}t + \frac{d-4}{2\sqrt{2}} \log t + y \leq f^{t,y+5}_t$, concluding the proof.
\end{proof}

\section{A local lower bound on the maximal displacement}
\label{sec:lowerbound}

For any $t \geq 1$ and $y \in [1,t^{1/2}]$, we set
\[
  \calA^{t,y} = \left\{ u \in \calN_t : \forall s \leq t, \norm{X_s(u)} \leq f^{t,y}_s \right\}.
\]
By Lemma \ref{lem:frontier}, we have $\calA^{t,y} = \calN_t$ with probability at least $1 - C y e^{-y}$. Let $v \in \S^{d-1}$, we introduce
\[
  \calA^{t,y}_v = \left\{ u \in \calA^{t,y} : X_t(u).v \geq f^{t,y}_t - 1 \right\},
\]
which is with high probability the set of individuals that made a large displacement at time $t$ in direction $v$. To obtain a lower bound for $R_t$, we bound from below the probability that $\calA^{t,y}_v$ is non-empty, using the Cauchy-Scharz inequality. We start bounding the mean of $\# \calA^{t,y}_v$.
\begin{lemma}
\label{lem:firstmoment}
There exists $C>0$ such that for any $t \geq 1$, $y \in [1,t^{1/2}]$ and $v \in \S^{d-1}$,
\[
  \frac{ye^{-\sqrt{2}y}}{Ct^{(d-1)/2}} \leq \E\left[ \#\calA^{t,y}_v \right]\leq \frac{Cye^{-\sqrt{2}y}}{t^{(d-1)/2}}.
\]
\end{lemma}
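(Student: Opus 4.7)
The plan is to apply the many-to-one lemma to reduce the computation to a one-dimensional Brownian estimate, exploiting the orthogonal decomposition of the $d$-dimensional Brownian motion. By Lemma~\ref{lem:many-to-one},
\[
  \E[\#\calA^{t,y}_v] = e^t\,\P\big(\|B_s\|\leq f^{t,y}_s,\ s\leq t,\ B_t\cdot v\geq f^{t,y}_t-1\big),
\]
where $B$ is a $d$-dimensional Brownian motion. Setting $\beta_s = B_s\cdot v$, a standard real BM, and $B^\perp_s = B_s - \beta_s v$, an independent $(d-1)$-dimensional BM in $v^\perp$, one has $\|B_s\|^2 = \beta_s^2 + \|B^\perp_s\|^2$.

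For the upper bound I would drop the norm constraint via $\|B_s\|\geq \beta_s$, reducing to the one-dimensional event $\{\beta_s \leq f^{t,y}_s,\ s\leq t,\ \beta_t \in [f^{t,y}_t-1, f^{t,y}_t]\}$ (the upper endpoint coming from path continuity). The computation then mirrors that of the proof of Lemma~\ref{lem:frontier}: remove the $\sqrt{2}s$ drift by Girsanov, bound the resulting weight by $Ce^{-\sqrt{2}\tilde{f}^{t,y}_t}$ on the event, and apply \eqref{eqn:excursionBended} after time reversal, so that the barrier $g(s) = \tilde{f}^{t,y}_t - \tilde{f}^{t,y}_{t-s}$ satisfies \eqref{eqn:propFunction}. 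The remaining probability is of order $\tilde{f}^{t,y}_0 / t^{3/2} \asymp y/t^{3/2}$; combined with $e^{-\sqrt{2}\tilde{f}^{t,y}_t} = e^{-\sqrt{2}y}(t+y)^{-(d-1)/2}(t+1)^{3/2}$ and the factor $e^t$, this yields the upper bound.

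For the lower bound I would use the independence of $\beta$ and $B^\perp$ to write
\[
  \E[\#\calA^{t,y}_v] = e^t\,\E\big[\mathbf{1}_A\,\P\big(\|B^\perp_s\|^2\leq (f^{t,y}_s)^2 - \beta_s^2,\ s\leq t \mid \beta\big)\big],
\]
restricting to a sub-event $A$ where $\beta$ stays at margin $\asymp \sqrt{s(t-s)/t}$ below $f^{t,y}_s$ on the interior $[1,t-1]$, $\beta_t \in [f^{t,y}_t-1, f^{t,y}_t]$, and $\beta_s\geq 0$ on $[1,t]$. The lower bound in \eqref{eqn:excursionBended}, combined with the same Girsanov argument applied to a slightly stricter curved barrier (still in the class \eqref{eqn:propFunction} for an exponent $\alpha<1/2$), gives $e^t\P(A) \gtrsim ye^{-\sqrt{2}y}/t^{(d-1)/2}$. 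On $A$, since $\beta_s\geq 0$ one has $f^{t,y}_s + \beta_s \geq \sqrt{2}s$, hence $(f^{t,y}_s)^2 - \beta_s^2 \gtrsim s\sqrt{s(t-s)/t}$, which exceeds a uniform multiple of $s$ on $[1,t-1]$. A BES$(d-1)$/Doob estimate then shows that the conditional probability is bounded below uniformly, with separate handling of $s\in[0,1]$ (trivially by Gaussian scaling) and $s\in[t-1,t]$ (via the $\chi^2_{d-1}$ tail at $s=t$).

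The main obstacle is the lower bound: the sub-event $A$ must be simultaneously large enough in probability to match the upper-bound order, and small enough that the margin $(f^{t,y}_s)^2-\beta_s^2$ absorbs the orthogonal fluctuations with positive conditional probability. The Brownian bridge scaling $\sqrt{s(t-s)/t}$ naturally achieves this balance, and the degeneracy of the margin at $s=t$ is harmless because $\|B^\perp_t\|^2 = O(t)$ with positive probability.
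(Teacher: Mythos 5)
Your upper bound is essentially the paper's own argument: project on $v$, discard the norm constraint via $\norm{B_s}\geq B_s.v$, apply Girsanov and \eqref{eqn:excursionBended}. The time reversal you invoke is unnecessary (one can apply \eqref{eqn:excursionBended} directly after recentering the barrier at $s=0$), but this is cosmetic. That half is fine.

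The lower bound contains a genuine gap, located exactly at the scale you chose for the margin. You require $f^{t,y}_s-\beta_s\gtrsim\sqrt{s(t-s)/t}\asymp\sqrt{s\wedge(t-s)}$ on $[1,t-1]$ and then confine $B^\perp$ to the guaranteed room, which you reduce to $\norm{B^\perp_s}^2\lesssim s$. Both steps lose polynomial factors in $t$. First, under the tilted, ballot-conditioned law the process $f^{t,y}_s-\beta_s$ is a Bessel(3)-type bridge whose \emph{typical} size at time $s$ is itself of order $\sqrt{s\wedge(t-s)}$; after the time change $s=e^u$ the normalized process is an ergodic (Ornstein--Uhlenbeck-type) diffusion, and the event that it stays above a fixed constant $c$ for time $\log t$ decays like $t^{-\nu(c)}$ with $\nu(c)>0$ (equivalently, by the Dvoretzky--Erd\H{o}s test, a Bessel(3) process falls below $c\sqrt{s}$ infinitely often in logarithmic time). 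So $e^t\P(A)$ is smaller than $ye^{-\sqrt 2y}t^{-(d-1)/2}$ by a power of $t$, not comparable to it. Second, and independently, $\P\bigl(\norm{B^\perp_s}\leq c\sqrt{s},\ 1\leq s\leq t-1\bigr)$ is \emph{not} bounded below uniformly in $t$: it is again a confinement event for a stationary ergodic diffusion over time $\log t$ and decays like $t^{-\mu(c)}$, so the claimed uniform lower bound on the conditional probability is false. The way out — and this is what the paper does — is to take a sub-diffusive margin $\delta_s=(s\wedge(t-s))^{\alpha}$ with $0<\alpha<1/2$: the integral test then converges, so imposing the margin costs only a constant and \eqref{eqn:excursionBended} (whose hypothesis \eqref{eqn:propFunction} requires exponent $<1/2$) still gives the order $y/t^{3/2}$; the room $s\delta_s$ then accommodates a super-diffusive tube of width $\asymp\sqrt{s\delta_s}\gg\sqrt{s}$, whose confinement probability is uniformly positive by a Borel--Cantelli estimate. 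One needs simultaneously $w_s^2\lesssim s\delta_s$, $w_s/\sqrt{s}\to\infty$ and $\delta_s=o(\sqrt{s})$; your diffusive choice sits exactly on the critical line where both requirements fail.
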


\begin{proof}
Let $t \geq 1$, $y \in [1,t^{1/2}]$ and $v \in \S^{d-1}$. The upper bound of $\E(\#\calA_v^{t,y})$ is a straightforward computation. Applying the many-to-one lemma, we have
\begin{align*}
  \E\left[ \# \calA^{t,y}_v \right]
  &= e^t \P\left[ B_t.v \geq f^{t,y}_t-1, \norm{B_s} \leq f^{t,y}_s, s \leq t \right]\\
  &\leq e^t \P\left[ \beta_t \geq f^{t,y}_t - 1, \beta_s \leq f^{t,y}_s, s \leq t \right],
\end{align*}
where $B$ is a $d$-dimensional Brownian motion, and $\beta = B.v$ is a one-dimensional one. By the Girsanov transform,
\[
  \E\left[ \# \calA^{t,y}_v \right] \leq \E\left[ e^{-\sqrt{2} \beta_t} \ind{\beta_t \geq \tilde{f}^{t,y}_t - 1, \beta_s  \leq \tilde{f}^{t,y}_s, s \leq t} \right]
  \leq C e^{-\sqrt{2}y} t^{(4-d)/2} \frac{y}{t^{3/2}},
\]
using \eqref{eqn:excursionBended}, which ends the proof of the upper bound.

The lower bound is obtained in a similar fashion. Let $B$ be a $d$-dimensional Brownian motion, we set $\beta=B.v$ and $B^\perp = B - \beta v$. Note that $B^\perp$ is a $d-1$-dimensional Brownian motion independent of $\beta$. Let $\alpha < 1/2$, there exists $\lambda > 0$ such that for any $1 \leq s \leq t$ and $y \geq 1$,
\[
  \left\{ x \in \R^d : \begin{array}{l} - 1/2 \leq x.v \leq f^{t,y}_s - 1/2 - (s\wedge (t-s))^\alpha\\ \norm{x - (x.v)v} \leq 1/2 + \lambda \min(s^{\alpha + 1/2},t^{1/2})\end{array} \right\}\\
  \subset \left\{ x \in \R^d : \norm{x} \leq f^{t,y}_s \right\}.
\]
Therefore, using Lemma \ref{lem:many-to-one}, we have
\begin{align*}
  \E\left[ \# \calA^{t,y}_v \right]
  &= e^t \P\left[ B_t.v \geq f^{t,y}_t-1, \norm{B_s} \leq f^{t,y}_s, s \leq t \right]\\
  &\geq e^t \P\left[ \beta_t \geq f^{t,y}_t - 1, -1 \leq \beta_s \leq f^{t,y}_s-1/2-(s\wedge (t-s))^\alpha, s \leq t \right]\\
  &\qquad \qquad \qquad \qquad\qquad \times \P\left[ \norm{B^\perp_s} \leq 1/2 + \lambda \min\left(s^{1/2+\alpha} , t^{1/2}\right), s \leq t \right].
\end{align*}
By standard Brownian estimates, we have
\[
  \inf_{t > 0} \P\left[ \norm{B^\perp_s} \leq 1 + \lambda \min\left(s^{1/2+\alpha} , t^{1/2}\right), s \leq t \right] > 0.
\]
Moreover, using once again the Girsanov transform and \eqref{eqn:excursionBended}, we have
\begin{multline*}
  e^t \P\left[ \beta_t \geq f^{t,y}_t - 1, -1 \leq \beta_s \leq f^{t,y}_s-1/2-(s\wedge (t-s))^\alpha, s \leq t \right]\\
  \geq \E\left[ e^{-\sqrt{2} \beta_t} \ind{\beta_t \geq \tilde{f}^{t,y}_t - 1, -1 \leq \beta_s \leq \tilde{f}^{t,y}_s-1/2-(s\wedge (t-s))^\alpha, s \leq t} \right]
  \geq \frac{e^{-\sqrt{2}y}}{C} t^{(4-d)/2} \frac{y}{t^{3/2}}.
\end{multline*}
We conclude that $\E\left[ \# \calA^{t,y}_v \right] \geq e^{-\sqrt{2} y} t^{(4-d)/2} \frac{y}{Ct^{3/2}}$.
\end{proof}

We now bound from above the second moment of $\#\calA_v^{t,y}$.
\begin{lemma}
\label{lem:secondmoment}
There exists $C>0$ such that for any $t \geq 1$, $y \in [1,t^{1/2}]$ and $v \in \S^{d-1}$,
\[
  \E\left[ (\# \calA^{t,y}_v )^2 \right] \leq \frac{Cye^{-\sqrt{2}y}}{t^{(d-1)/2}} .
\]
\end{lemma}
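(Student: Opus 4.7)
Apply the many-to-two lemma (Lemma~\ref{lem:many-to-two}) with $f = g$ the indicator that a trajectory satisfies the conditions defining $\calA^{t,y}_v$. This expresses $\E[(\#\calA^{t,y}_v)^2]$ as a diagonal term equal to $\E[\#\calA^{t,y}_v]$---already bounded by Lemma~\ref{lem:firstmoment}---plus the integral
\[
\int_0^t e^{2t-s}\, \E\bigl[f(B_\cdot)\, f(W^{(s)}_\cdot)\bigr]\,\mathrm{d}s,
\]
which must also be shown to be $O(ye^{-\sqrt{2}y}/t^{(d-1)/2})$.

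The first reduction is to relax the ball condition to its projection on $v$. Writing $\beta = B\cdot v$ and $\beta' = W^{(s)}\cdot v$, we have $\beta,\beta'\leq\|B\|,\|W^{(s)}\|\leq f^{t,y}$, and these two $1$-dimensional Brownian motions coincide on $[0,s]$ and have independent increments on $[s,t]$. Conditioning on $\calF_s$ therefore gives
\[
\E[f(B)f(W^{(s)})] \leq \E\bigl[\ind{\beta_u \leq f^{t,y}_u,\, u \leq s}\, h(\beta_s, s)^2\bigr],
\]
with $h(x, s) := \P_x\bigl(\beta_u \leq f^{t,y}_u, u\in[s,t];\ \beta_t \geq f^{t,y}_t - 1\bigr)$ the one-branch probability.

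Next, perform a Girsanov shift of drift $\sqrt{2}$ on each of the three independent Brownian pieces (the common path on $[0,s]$ and each branch on $[s,t]$). The prefactor $e^{2t-s} = e^s\cdot(e^{t-s})^2$ is exactly cancelled by the three Girsanov compensators, and the barrier $f^{t,y}_u$ becomes the slowly growing $\tilde f^{t,y}_u = f^{t,y}_u - \sqrt{2}u$. Setting $z := f^{t,y}_s - \beta_s \geq 0$, the excursion estimate \eqref{eqn:excursionBended} applied to each branch yields
\[
h(\beta_s, s) \leq C(z+1)\, e^{-(t-s)}\, e^{-\sqrt{2}z}\, \Bigl(\frac{s+y}{t+y}\Bigr)^{(d-1)/2}\, \frac{(t-s+1)^{3/2}}{(t-s)^{3/2}},
\]
the polynomial factor in $s$ arising from $e^{\sqrt{2}(\tilde f^{t,y}_s - \tilde f^{t,y}_t)} = \bigl(\frac{s+y}{t+y}\bigr)^{(d-1)/2}(t-s+1)^{3/2}$. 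Applied analogously to the common path, \eqref{eqn:excursionBended} bounds the density of $\beta_s$ below the barrier by $C(y\wedge\sqrt{s})(z\wedge\sqrt{s})/s^{3/2}$, weighted by $e^{-\sqrt{2}\tilde f^{t,y}_s - s}$ with $e^{-\sqrt{2}\tilde f^{t,y}_s} = e^{-\sqrt{2}y}(s+y)^{-(d-1)/2}\bigl((t+1)/(t-s+1)\bigr)^{3/2}$. When these factors are multiplied, the squared branch factor $(t-s+1)^3$ tames the $(t-s)^3$ denominator, the power $(s+y)^{d-1}$ combines with the common-path weight to yield a factor of order $(s+y)^{(d-1)/2}/(t+y)^{d-1}$, and the $z$ integration converges thanks to the $e^{-2\sqrt{2}z}$ surplus from the two branches; the remaining integrand in $s$ is integrable on $[1,t]$ and delivers the target bound.

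The main obstacle is the careful bookkeeping of the logarithmic drifts in $\tilde f^{t,y}$ across the three independent pieces so that the exponentials from the Girsanov shifts and the polynomial prefactors from the ballot estimates telescope exactly to the order $ye^{-\sqrt{2}y}/t^{(d-1)/2}$ of the first moment. A secondary point is treating the boundary regimes uniformly: for $s$ close to $0$ the ballot density for the common path must be replaced by a Gaussian density (which is strongly suppressed by $e^{-(\tilde f^{t,y}_s)^2/(2s)}$ and therefore negligible), while for $s$ close to $t$ the bound on $h$ above degenerates like $(t-s)^{-3/2}$ and must be replaced by a direct Gaussian tail estimate on $\beta_t - \beta_s$.
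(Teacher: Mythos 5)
Your proposal is correct and follows essentially the same route as the paper: many-to-two lemma, projection of the ball constraint onto $v$, conditional independence of the two branches after the split time $s$, Girsanov plus the excursion estimate \eqref{eqn:excursionBended} on each of the three pieces, a decomposition over the endpoint $\beta_s$ (your $z$-integration is the paper's sum over $\beta_s - \tilde{f}^{t,y}_s \in [-k-1,-k]$), and a final integration over $s$. Your bound on $h(\beta_s,s)$ and the telescoping of the factors $e^{\pm\sqrt{2}\tilde{f}^{t,y}_s}$ match the paper's intermediate estimate \eqref{eqn:interest}, so the plan is sound.
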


\begin{proof}
To compute this second moment, we use Lemma \ref{lem:many-to-two}. Let $B$ and $B'$ be two independent Brownian motions, and $W^s : r \in [0,t] \mapsto B _{r \wedge s} + B'_{(r-s)_+}$. We have
\begin{equation}
  \label{eqn:secondapprox}
  \E\left[ \left(\# \calA^{t,y}_v\right)^2 \right] \leq \E\left[ \# \calA^{t,y}_v \right]
  + \int_0^t e^{2t-s}\P\left[ \begin{array}{l} B_t.v \geq f^{t,y}_t-1, \norm{B_r} \leq f^{t,y}_r, r \leq t \\ W^s_t.v \geq f^{t,y}_t - 1, \norm{W^s_r} \leq f^{t,y}_r, r \leq t\end{array} \right] ds.
\end{equation}
By Lemma \ref{lem:firstmoment}, we have $\E\left[ \# \calA^{t,y}_v \right] \leq Cye^{-\sqrt{2}y}t^{-(d-1)/2}$.

Let $0 \leq s \leq t$ and $\beta$ a standard Brownian motion. By the Markov property, we have
\begin{align*}
  e^{2t-s}\P\left[ \begin{array}{l} B_t.v \geq f^{t,y}_t-1, \norm{B_r} \leq f^{t,y}_r, r \leq t \\ W^s_t.v \geq f^{t,y}_t - 1, \norm{W^s_r} \leq f^{t,y}_r, r \leq t\end{array} \right]
  &\leq e^{2t-s} \P\left[ \begin{array}{l} B_t.v \geq f^{t,y}_t-1, B_r.v \leq f^{t,y}_r, r \leq t \\ W^s_t.v \geq f^{t,y}_t - 1, W^s_r.v \leq f^{t,y}_r, r \leq t\end{array} \right]\\
  &\leq e^s \E\left[ \phi_s(\beta_s)^2 \ind{\beta_r \leq f^{t,y}_r, r \leq s}\right],
\end{align*}
where $\phi_s : x \in \R \mapsto e^{t-s}\P\left[ \beta_{t-s} \geq f^{t,y}_t-1, \beta_r + x \leq f^{t,y}_{s+r}, r \leq t-s \right]$. By the Girsanov transform and \eqref{eqn:excursionBended} again, we have
\[
  \phi_s(x) \leq \frac{C(1 + (f^{t,y}_s - x)_+)e^{\sqrt{2}(x-y-\sqrt{2}s)}}{(t+1)^{(d-4)/2}(t-s)^{3/2}}.
\]
Consequently
\begin{align*}
  &e^{2t-s}\P\left[ \begin{array}{l} B_t.v \geq f^{t,y}_t-1, \norm{B_r} \leq f^{t,y}_r, r \leq t \\ W^s_t.v' \geq f^{t,y}_t - 1, \norm{W^s_r} \leq f^{t,y}_r, r \leq t\end{array} \right]\\
  &\qquad \qquad \leq \frac{Ce^{-2\sqrt{2}y}}{(t+1)^{d-4} (t-s+1)^3} e^s \E\left[ e^{2 \sqrt{2}(\beta_s-\sqrt{2}s)} \left(1 + \left(f^{t,y}_s - \beta_s\right)\right)^2 \ind{\beta_r \leq f^{t,y}_r, r \leq s} \right]\\
  &\qquad \qquad \leq \frac{Ce^{-2\sqrt{2}y}}{(t+1)^{d-4} (t-s+1)^3} \E\left[ e^{\sqrt{2}\beta_s} \left(1 + \left(\tilde{f}^{t,y}_s-\beta_s\right) \right)^2  \ind{\beta_r \leq \tilde{f}^{t,y}_r, r \leq s} \right],
\end{align*}
using the Girsanov transform. By decomposition with respect to the value of $\beta_s$ we have
\begin{align*}
  &\E\left[ e^{\sqrt{2}\beta_s} \left(1 + \left(\tilde{f}^{t,y}_s-\beta_s\right) \right)^2  \ind{\beta_r \leq \tilde{f}^{t,y}_r, r \leq s} \right]\\
  &\qquad \qquad \leq C \sum_{k = 0}^{+\infty} e^{\sqrt{2}(\tilde{f}^{t,y}_s-k)}(k+1)^2 \P\left( \beta_s-\tilde{f}^{t,y}_s \in [-k-1,-k], \beta_r \leq \tilde{f}^{t,y}_r, r \leq s \right)\\
  &\qquad \qquad \leq \frac{Cye^{\sqrt{2}y}}{(s+1)^{3/2}} \frac{(s+y+1)^{(d-1)/2}(t-s+1)^{3/2}}{t^{3/2}} \sum_{k=0}^{+\infty} (k+1)^3e^{-\sqrt{2}k}.
\end{align*}

We conclude that for any $0 \leq s \leq t$,
\begin{multline}
  \label{eqn:interest}
  e^{2t-s}\P\left[ \begin{array}{l} B_t.v \geq f^{t,y}_t-1, \norm{B_r} \leq f^{t,y}_r, r \leq t \\ W^s_t.v' \geq f^{t,y}_t - 1, \norm{W^s_r} \leq f^{t,y}_r, r \leq t\end{array} \right]\\ \leq \frac{Cye^{-\sqrt{2}y}}{(t+1)^{(d-1)/2}} \frac{(s+y+1)^{(d-1)/2}}{(t+1)^{(d-1)/2}} \frac{t^{3/2}}{(s+1)^{3/2}(t-s+1)^{3/2}}.
\end{multline}
Therefore, \eqref{eqn:secondapprox} yields $\E\left[ (\# \calA^{t,y}_v )^2 \right] \leq \frac{Cye^{-\sqrt{2}y}}{t^{(d-1)/2}}.$
\end{proof}

\begin{remark}
\label{rem:nice}
Observe that \eqref{eqn:interest}, a straightforward adaptation of Lemma \ref{lem:many-to-two}, Lemma~\ref{lem:geoUpperbound} and Lemma \ref{lem:frontier} yield the following noticeable side result: as soon as $d \geq 2$, for any $0 \leq R \leq t$
\begin{equation}
  \label{eqn:nice}
  \P\left[\exists u,u' \in \calN_t : \begin{array}{c} \norm{X_t(u)} \geq f^{t,0}_t, \norm{X_t(u')} \geq f^{t,0}_t, \norm{X_t(u)-X_t(u')} \leq t^{1/2}\\ \text{MRCA}(u,u') \leq t-R\end{array} \right] \leq \frac{C \log R}{R^{1/2}},
\end{equation}
where $\text{MRCA}(u,u')$ is the time at which the most recent common ancestor of $u$ and $u'$ was alive. In other words, in a $d$-dim. BBM, two individuals on the frontier of the process that are close to each other are close relatives. This result is well-known to fail in dimension 1, where individuals close to the edge of the process are either close relative, or their lineage had split within time $O_\P(1)$.
\end{remark}

Using Lemmas \ref{lem:firstmoment} and \ref{lem:secondmoment} as well as the Cauchy-Schwarz inequality, for any $t \geq 1$, $y \in [1,t^{1/2}]$ and $v \in \S^{d-1}$, we have
\begin{equation}
  \label{eqn:proba}
  \P\left( \exists u \in \calN_t : V(u).v \geq \sqrt{2}t + \frac{d-4}{2\sqrt{2} \log t} + y \right) \geq \frac{ye^{-\sqrt{2}y}}{C t^{(d-1)/2}}.
\end{equation}
This is a lower bound of the probability there exists an individual making a large displacement in direction $v$. To conclude the proof of the lower bound of Theorem \ref{thm:main}, we bound from above the correlation between the existence of individuals making large displacements in two distinct directions $v$ and $v'$ at the same time.
\begin{lemma}
\label{lem:crossedmoment}
There exists $C > 0$ such that for any $t \geq 1$, $y \in [1,t^{1/2}]$ and $v,v' \in \S^{d-1}$ such that $\norm{v-v'} \geq Ct^{-1/2}$ and $\norm{v+v'} \geq 3/2$, we have
\[
  \E\left[ \# \calA^{t,y}_v \# \calA^{t,y}_{v'} \right] \leq \frac{Cye^{-\sqrt{2}y}}{(t+1)^{(d-1)/2}} \left[ \frac{1}{\theta^{d-2}(t+1)^{(d-1)/2}} + e^{-\theta^2 t} \right],
\]
where $\theta = \arccos(v.v')$.
\end{lemma}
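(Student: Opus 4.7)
The proof is a refinement of the scheme in Lemma \ref{lem:secondmoment}, introducing the geometry of the two directions $v,v'$. Applying Lemma \ref{lem:many-to-two} yields
\begin{equation*}
  \E\!\left[\#\calA^{t,y}_v\,\#\calA^{t,y}_{v'}\right] \leq \E\!\left[\#(\calA^{t,y}_v \cap \calA^{t,y}_{v'})\right] + \int_0^t e^{2t-s}\,Q_s\,ds,
\end{equation*}
where $Q_s$ denotes the joint probability that $B$ satisfies the $v$-condition and $W^{(s)}$ the $v'$-condition. The diagonal term is at most $\E[\#\calA^{t,y}_v]$, which by Lemma \ref{lem:firstmoment} is $O(ye^{-\sqrt{2}y}(t+1)^{-(d-1)/2})$ and is absorbed in the first bracketed term of the desired estimate.

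To control the integral, apply the Markov property at time $s$: conditionally on $\calF_s$, the two descendants perform independent Brownian motions of length $t-s$ started from $B_s$. Introduce $w \in v^\perp \cap \S^{d-1}$ with $v' = \cos\theta\,v + \sin\theta\,w$, and decompose any $d$-dim Brownian motion as $B_r = \beta_r v + \gamma_r w + B^\perp_r$, with $\beta,\gamma$ independent one-dim BMs and $B^\perp$ an independent $(d-2)$-dim BM. For each of the two conditional futures, bound the probability as in Lemma \ref{lem:firstmoment} by tilting by $\sqrt{2}v$ (resp.\ $\sqrt{2}v'$) and applying the ballot estimate \eqref{eqn:excursionBended}, producing a factor $C(t-s+1)^{-3/2}$ together with an exponential Radon--Nikodym derivative evaluated at $B_s$.

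Regrouping with the $e^{2t-s}$ factor reduces the problem to an expectation over the shared path on $[0,s]$. A further Girsanov tilt by $\sqrt{2}(v+v')$ turns $B_r$ into a Brownian motion with drift $2\sqrt{2}\cos(\theta/2)$ along the bisector $u_+:=(v+v')/\norm{v+v'}$ and no drift in the orthogonal directions. Under the hypothesis $\norm{v+v'}\geq 3/2$, one has $2\cos(\theta/2)>1$, so this drift is \emph{supercritical} relative to the frontier slope $\sqrt{2}$. Confining $B_r$ to the ball $\{\norm{B_r}\leq f^{t,y}_r\}$ for $r\leq s$ then costs a Gaussian factor $\exp(-c\theta^2 s)$ along $u_+$, while the $(d-2)$-dim component $B^\perp$ must stay in a region compatible with both futures reaching their respective targets; a spherical-cap argument in the spirit of Lemma \ref{lem:geoUpperbound} contributes the geometric factor $\theta^{-(d-2)}$.

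Finally, split the $s$-integral at $s \sim \theta^{-2}$: for $s\leq \theta^{-2}$, the Gaussian penalty is $O(1)$ and the combination of the geometric and ballot factors integrates to the contribution $\theta^{-(d-2)}(t+1)^{-(d-1)/2}$; for $s\geq \theta^{-2}$, the Gaussian penalty dominates and produces $e^{-\theta^2 t}$. The hypothesis $\norm{v-v'}\geq Ct^{-1/2}$, i.e.\ $\theta\geq Ct^{-1/2}$, guarantees that this splitting is meaningful. The main obstacle I foresee is the sharp geometric factor $\theta^{-(d-2)}$: it requires identifying precisely the $(d-2)$-dimensional spherical region of $B^\perp$ simultaneously compatible with both future paths remaining in the ball, rather than a naive bound coming from applying Lemma \ref{lem:geoUpperbound} to each future independently.
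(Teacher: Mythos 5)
Your skeleton matches the paper's: many-to-two, Markov property at the branching time $s$, Girsanov tilts for the two independent futures producing ballot factors $C(t-s+1)^{-3/2}$, a joint tilt of the shared past along the bisector whose energy deficit gives the Gaussian penalty $e^{-c\theta^2 s}$, and a split of the $s$-integral at $s\sim\theta^{-2}$. However there are two genuine problems. First, your treatment of the diagonal term is wrong: you bound $\E[\#(\calA^{t,y}_v\cap\calA^{t,y}_{v'})]$ by $\E[\#\calA^{t,y}_v]\approx ye^{-\sqrt 2 y}t^{-(d-1)/2}$ and claim it is absorbed into the first bracketed term. But for $\theta$ bounded away from $0$ both bracketed terms are $o(1)$, so the target bound is strictly smaller than $\E[\#\calA^{t,y}_v]$ and nothing absorbs the diagonal. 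The paper's fix is that the diagonal vanishes identically: the constant $C$ in the hypothesis $\norm{v-v'}\geq Ct^{-1/2}$ is chosen so that $\{x:\norm{x}\leq f^{t,y}_t,\ x.v\geq f^{t,y}_t-1,\ x.v'\geq f^{t,y}_t-1\}=\emptyset$, hence $\calA^{t,y}_v\cap\calA^{t,y}_{v'}=\emptyset$. Your argument needs this observation, not an absorption claim.

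Second, the obstacle you flag — the origin of the factor $\theta^{-(d-2)}$ — is real, and your proposed route (a spherical-cap covering of the $(d-2)$-dimensional transverse component $B^\perp$) is not how it arises and would be hard to execute. In the paper the mechanism is one-dimensional and algebraic. After the two future tilts, each future contributes a linear prefactor $(f^{t,y}_s-x.v)_+$ (resp. $(f^{t,y}_s-x.v')_+$) from \eqref{eqn:excursionBended}; writing $x.v$ and $x.v'$ in the bisector coordinate $w=(v+v')/\norm{v+v'}$ and using the elementary inequality $(a+b)_+(a-b)_+\leq 2a^2$ gives
\[
  \left(f^{t,y}_s-x.v\right)_+\left(f^{t,y}_s-x.v'\right)_+ \leq C\left((s+y)^2\theta^4+\left(f^{t,y}_s-x.w\right)^2\right),
\]
so the whole computation collapses onto the single coordinate $\beta=B.w$. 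The term $(s+y)^2\theta^4$, multiplied by the factor $(s+y)^{(d-1)/2}$ coming from $e^{\sqrt2 \tilde{f}^{t,y}_s}$ and by the Gaussian penalty $e^{-c\theta^2(s+y)}$, integrates over $s\in[0,t/2]$ via $\int_0^{\infty}s^{d/2}e^{-\lambda s}\,ds=\Gamma(d/2+1)\lambda^{-d/2-1}$ with $\lambda\asymp\theta^2$, which yields exactly $\theta^4\cdot\theta^{-d-2}=\theta^{-(d-2)}$ times $(t+1)^{-(d-1)/2}$; the range $s>t/2$ gives the $e^{-\theta^2 t}$ term. No covering argument and no separate analysis of $B^\perp$ is needed. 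Without the product inequality and the Gamma-integral step, your outline does not produce the stated exponent.
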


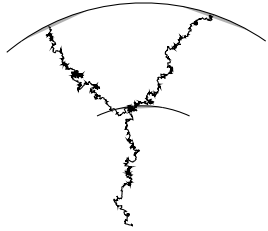
\begin{figure}[ht]
\centering
\begin{tikzpicture}[scale=0.4]
\fill [color=black!30] (-3.7,7.7) arc (125:105:7) -- cycle;
\fill [color = black!30] (3.8,8.03) arc (60:80:7) -- cycle;
\fill [color=black!30] (0.93,5.5) arc (80:105:3.6) -- cycle;
\draw (0.93,5.5) arc (80:65:3.6);
\draw (0.93,5.5) arc (80:115:3.6);

\draw (-0.0471336979397,1.56454565359) --(-0.063469715979,1.61154835108) --(-0.0568166713905,1.60514674446) --(-0.0635243768575,1.58552444894) --(-0.0796870951769,1.60768278472) --(-0.104634902552,1.6025396713) --(-0.158290557453,1.60706711765) --(-0.22255738734,1.63289638727) --(-0.265597491009,1.63676940028) --(-0.301609634219,1.65861129735) --(-0.279813214836,1.67534458395) --(-0.298054014779,1.71792499842) --(-0.281839449278,1.73041541582) --(-0.29808720058,1.72145284189) --(-0.279590055315,1.78250596912) --(-0.224806001708,1.74684584282) --(-0.228824294773,1.79003663809) --(-0.223721621593,1.87179480437) --(-0.178256294131,1.89973012039) --(-0.151345938096,1.93631052276) --(-0.160576343234,1.95231261938) --(-0.188393813171,1.96459916678) --(-0.20802171358,1.98012923155) --(-0.162077627035,1.97159712814) --(-0.186411683614,1.94123269975) --(-0.209272424955,1.93000504535) --(-0.262402665253,1.91586289768) --(-0.244133770319,1.90670913366) --(-0.231836152601,1.93635118276) --(-0.31005336467,1.95471695403) --(-0.331204329233,2.03305681745) --(-0.319237010616,2.04223392889) --(-0.253938865329,2.05766401593) --(-0.312253426372,2.09211274133) --(-0.350990528309,2.11327924173) --(-0.433259300325,2.12805937901) --(-0.456510049218,2.14396949192) --(-0.403668670191,2.17640235651) --(-0.404325441555,2.24447815433) --(-0.498366475979,2.23400126599) --(-0.539214649779,2.24121017356) --(-0.533120216171,2.23362663163) --(-0.529270613385,2.20151213287) --(-0.525441443436,2.22494457626) --(-0.58152281932,2.27547040149) --(-0.566720541519,2.29807326001) --(-0.578445407239,2.32908466824) --(-0.589837139917,2.31566150191) --(-0.589915721933,2.3241797858) --(-0.615650338459,2.34266411428) --(-0.624990061988,2.36685337964) --(-0.636005912535,2.36802918985) --(-0.664908964294,2.4369198989) --(-0.657049882044,2.45050285164) --(-0.645388677229,2.41991225055) --(-0.617542330816,2.42492557909) --(-0.651942018779,2.41503181555) --(-0.594055762901,2.4237759193) --(-0.586620225465,2.39673804228) --(-0.563556718358,2.41177448097) --(-0.535382517445,2.42876935111) --(-0.551772455817,2.44528803175) --(-0.561645389172,2.4189946697) --(-0.552500957982,2.46660118931) --(-0.525074417659,2.48554609334) --(-0.563614269588,2.5663670273) --(-0.543825530701,2.52463645483) --(-0.537598571864,2.4902453206) --(-0.483735831053,2.48821182987) --(-0.479244818848,2.47085713161) --(-0.456405695395,2.48305553484) --(-0.441400794364,2.49493236919) --(-0.416025820498,2.47630268211) --(-0.392064584879,2.53833102385) --(-0.403852679551,2.56991136922) --(-0.396234118067,2.60476178213) --(-0.392881044928,2.6758639946) --(-0.393867799831,2.69873392486) --(-0.379919130658,2.73287885712) --(-0.423649726021,2.7409248437) --(-0.39626989744,2.7946182679) --(-0.39891407605,2.86346915935) --(-0.357645185663,2.89210420371) --(-0.337221652881,2.90287411434) --(-0.336891689487,2.88838086504) --(-0.322961188782,2.91691514718) --(-0.290618096027,2.93586286774) --(-0.283357168412,2.93547603615) --(-0.295395115325,2.96491051732) --(-0.321846327039,2.95884767411) --(-0.281041431418,2.96002813305) --(-0.282482252148,2.96553237226) --(-0.263038669358,2.98016526168) --(-0.294193191238,2.96866019657) --(-0.247602142483,2.96768111195) --(-0.21252981477,2.94000614988) --(-0.138776292916,2.91944341322) --(-0.123738699248,2.93740011409) --(-0.125174122927,2.90822898287) --(-0.134652983952,2.94342920735) --(-0.103629518474,2.95760535547) --(-0.111398013594,2.97679567966) --(-0.134403485036,2.97211716277) --(-0.148800273519,2.98264454482) --(-0.127010517106,2.94396232604) --(-0.151982279686,2.92218945684) --(-0.185348776256,2.94173995495) --(-0.1601695392,2.92117766723) --(-0.162943626321,2.95972157244) --(-0.18084116143,2.98154419715) --(-0.168285507275,2.97207181248) --(-0.148133703134,2.98777905723) --(-0.15524998363,2.99695624207) --(-0.159728445368,2.93181051368) --(-0.158663788479,2.96499124038) --(-0.161371021514,2.98794063873) --(-0.132042878464,2.97856279759) --(-0.120623755366,2.99790729442) --(-0.0818955628793,3.04809676697) --(-0.112324894041,3.01393939083) --(-0.0866465108577,3.01819510448) --(-0.130489030438,3.02590690562) --(-0.0973817246496,3.03464910735) --(-0.0757916412691,3.04266206693) --(-0.121315326633,3.06192390295) --(-0.070381404304,3.12067907009) --(-0.10220581023,3.12936448139) --(-0.150463751497,3.16041955424) --(-0.104818315523,3.1592863927) --(-0.102826846036,3.15222233841) --(-0.0987090553727,3.15461665685) --(-0.150320589943,3.17811254821) --(-0.16965516802,3.20189942283) --(-0.188727256551,3.21115950615) --(-0.225204192009,3.18476495825) --(-0.234155934103,3.22966977654) --(-0.26616892895,3.25827545484) --(-0.23689927385,3.28039389511) --(-0.208746517168,3.29609683712) --(-0.19306725654,3.27124730501) --(-0.185961798501,3.3243703399) --(-0.147974533225,3.28256945156) --(-0.105987774446,3.30938797571) --(-0.106449159351,3.23804244507) --(-0.0954803346605,3.25542417952) --(-0.0635910209727,3.25919390438) --(-0.0864064074825,3.23537582985) --(-0.101283199386,3.24235805984) --(-0.203930653682,3.29598594361) --(-0.164739408758,3.34256660933) --(-0.17532776708,3.31424049629) --(-0.141083088595,3.33720271987) --(-0.220453137353,3.31086782577) --(-0.244575630696,3.31348454953) --(-0.188860427228,3.29415640412) --(-0.247814092697,3.3556043084) --(-0.222555862127,3.31992216706) --(-0.232628411127,3.32857868927) --(-0.263892186608,3.36098849714) --(-0.233752092416,3.3530701148) --(-0.236881583453,3.33269010158) --(-0.245598705638,3.34309202479) --(-0.260396636775,3.36718403439) --(-0.250921390519,3.3607351382) --(-0.181369897936,3.39006075569) --(-0.17703604346,3.39387485328) --(-0.199245355671,3.41692322518) --(-0.210878261037,3.42049315958) --(-0.156184708186,3.42566098478) --(-0.15355339089,3.46253200122) --(-0.110439192303,3.49217377028) --(-0.117360245375,3.52692898442) --(-0.180605363213,3.4988854172) --(-0.213059189652,3.45284946434) --(-0.163300404004,3.40161317707) --(-0.152569691087,3.429703351) --(-0.168482092092,3.41220891356) --(-0.16952966116,3.46204124165) --(-0.14762532641,3.50630386348) --(-0.0719550779068,3.52862802769) --(-0.0927880642945,3.52744035223) --(-0.121966326655,3.4839210714) --(-0.119602013821,3.48929147315) --(-0.147215593572,3.4777118637) --(-0.146360237254,3.43599513373) --(-0.141793318597,3.43341181029) --(-0.193544222809,3.46575949299) --(-0.169650405274,3.47553613828) --(-0.137608291422,3.50039860643) --(-0.152741733712,3.51710013549) --(-0.140626623372,3.53453825749) --(-0.14619421844,3.52498455648) --(-0.15392287263,3.52134750398) --(-0.190494117601,3.5587245485) --(-0.198381424383,3.55872114825) --(-0.296693511487,3.568440217) --(-0.273782803479,3.61497470332) --(-0.28691458135,3.64643478566) --(-0.319240596815,3.64534597965) --(-0.283975367552,3.62224849241) --(-0.237678949561,3.60789106701) --(-0.231797258331,3.64335757058) --(-0.234216392532,3.64241886933) --(-0.208683724366,3.69590951841) --(-0.212437606743,3.69359758523) --(-0.2373791997,3.72029025047) --(-0.203328861897,3.69317626126) --(-0.226180915183,3.73606668837) --(-0.259508600161,3.74631943831) --(-0.231813353898,3.73579990101) --(-0.25487604552,3.74715777196) --(-0.278293390352,3.71603615261) --(-0.28485075229,3.70896126006) --(-0.23364078822,3.7600590693) --(-0.223984438114,3.82196938542) --(-0.191425429625,3.79507835579) --(-0.196830799619,3.80804622399) --(-0.190567749152,3.82175588556) --(-0.18318896239,3.76068518853) --(-0.150346857128,3.73787569895) --(-0.133311479292,3.73289221174) --(-0.101176841228,3.70891392341) --(-0.0364959881128,3.68814403126) --(-0.0224982451758,3.71455645715) --(0.0181094073741,3.72397166897) --(0.0690543397182,3.78291094099) --(0.0509417491206,3.80712177788) --(0.0759246278663,3.84810217832) --(0.0428091837004,3.90174052189) --(0.0344193433716,3.88444545092) --(0.043145180335,3.91393180508) --(0.0550819803393,3.98129870211) --(0.0604150698525,3.95052073295) --(0.0503747585966,3.98419960592) --(0.0860810729991,4.00635774688) --(0.0088415866468,3.98722027811) --(0.0195641495912,3.98842389778) --(-0.00184010243906,3.92163262092) --(0.02643058675,3.91495723546) --(0.0309255686447,3.93512711978) --(0.0828660752424,3.99008453583) --(0.114201091929,4.05663681382) --(0.171706159123,4.07219829714) --(0.154224893883,4.09425476499) --(0.131411891748,4.08373392614) --(0.128781287093,4.07855472354) --(0.165372800621,4.04022501812) --(0.143829245854,4.06368114586) --(0.0713735445508,4.09330309175) --(0.0656097098204,4.06670063084) --(0.0281878915059,4.079289496) --(-0.00457620143839,4.10207573283) --(-0.0200078751153,4.12149093101) --(0.0134287105303,4.1636584473) --(-0.0474923111867,4.17302038241) --(-0.00824038787889,4.16405759112) --(0.0439159544811,4.15895146137) --(0.0254498124816,4.20467071865) --(-0.0093107166571,4.21142882819) --(-0.0351000793133,4.20327767667) --(-0.0325853030857,4.22781817052) --(-0.063457855436,4.21892690487) --(-0.065637569975,4.22904470435) --(-0.0861974017552,4.17188219459) --(-0.0599210321628,4.19634150772) --(-0.0649555682599,4.22458923071) --(-0.060175656848,4.22680104021) --(-0.0763219072707,4.1855098845) --(-0.0543377990006,4.21168380682) --(-0.0264845708444,4.20619393698) --(-0.0451979229082,4.1918219549) --(-0.107220374096,4.18212649483) --(-0.0897515699298,4.19931089624) --(-0.0887077562945,4.21257525158) --(-0.0500974859581,4.19028252012) --(-0.0833142833459,4.19420326059) --(-0.0703909715795,4.19887322352) --(-0.073852238681,4.22929912508) --(-0.0284499893325,4.21576368711) --(-0.0432332990081,4.26568375393) --(-0.0251628474977,4.2649886357) --(-0.0319833034429,4.3264798729) --(-0.0214980532322,4.29578755167) --(-0.0906956179434,4.30329084312) --(-0.1541597165,4.32985087727) --(-0.117404140095,4.36594538536) --(-0.158428770669,4.36607427374) --(-0.159656590012,4.36713609704) --(-0.180275658581,4.39432680868) --(-0.153280850325,4.38537759542) --(-0.111291233687,4.4224362908) --(-0.136955826588,4.42995219416) --(-0.150001281915,4.41733467566) --(-0.13454901032,4.39417244957) --(-0.118236442053,4.38964596028) --(-0.0846094334274,4.40374759705) --(-0.0966516909728,4.4078873908) --(-0.11341402383,4.41571055429) --(-0.122302602842,4.41378895938) --(-0.159052051987,4.44425257216) --(-0.170050564987,4.45687979157) --(-0.184515628991,4.43781072717) --(-0.181966233117,4.44860601978) --(-0.220825761807,4.4250277321) --(-0.227982095284,4.43223752044) --(-0.213625087298,4.40660787588) --(-0.187812316026,4.45292775574) --(-0.155419749541,4.493880495) --(-0.126518665247,4.58072699779) --(-0.152452476185,4.5977996836) --(-0.14640110002,4.62434591458) --(-0.196165277714,4.63678293488) --(-0.190473025609,4.64414964152) --(-0.200378792625,4.6431664146) --(-0.201473454429,4.63011475907) --(-0.171300769669,4.64999663761) --(-0.210312906163,4.62433502654) --(-0.188228857711,4.65643786657) --(-0.22080456177,4.64253955237) --(-0.166102426279,4.66935725661) --(-0.196753589213,4.754956423) --(-0.236690605604,4.77528495923) --(-0.268565194826,4.8090832732) --(-0.274949757487,4.77164255533) --(-0.297451579783,4.77528001369) --(-0.266821841152,4.81830960654) --(-0.254288138121,4.87481088528) --(-0.253857252483,4.87461183521) --(-0.270442693952,4.84646511555) --(-0.253289033376,4.84238427505) --(-0.245429062428,4.87095431511) --(-0.23833327904,4.85293967464) --(-0.23160207739,4.89468152125) --(-0.21556795878,4.89543843717) --(-0.219218566306,4.90833210581) --(-0.228667791637,4.9135238543) --(-0.224280230634,4.93021291925) --(-0.272894236919,4.97457081171) --(-0.273087127348,4.97619446855) --(-0.276001702644,4.98779649368) --(-0.272040864592,5.00148604572) --(-0.234065866537,4.96148384456) --(-0.261677184609,4.99420921636) --(-0.246456890027,5.02703190855) --(-0.269440058155,5.0897300271) --(-0.241162333286,5.08357744581) --(-0.247331692238,5.10748506521) --(-0.213042421607,5.10680976161) --(-0.180105290409,5.05679574734) --(-0.186690563184,5.05434996435) --(-0.195194517874,5.0480383457) --(-0.143204880047,5.11056350518) --(-0.166005168042,5.11037700791) --(-0.162180000105,5.07100048786) --(-0.167303345499,5.07351462408) --(-0.148197619678,5.08405345593) --(-0.130552018046,5.1088563729) --(-0.120157986196,5.09865035859) --(-0.096219784427,5.11885506626) --(-0.0357141342391,5.09047424386) --(-0.0470352069151,5.17260434641) --(-0.0628972764035,5.21189414282) --(-0.0716230185167,5.22247416567) --(-0.0748983020168,5.23670229394) --(-0.0778171204084,5.21438303483) --(-0.0880624494687,5.23168685871) --(-0.133663092642,5.28707600594) --(-0.140338838042,5.35593304535) --(-0.176100693015,5.39158149239) --(-0.136534219265,5.39016300271) --(-0.127390743424,5.39448318426) --(-0.105984008524,5.38994357344) --(-0.110235117849,5.43394713856) --(-0.0924169932345,5.43388359052) --(-0.101201671794,5.42635874779) --(-0.129688265081,5.46025055157)  ; 
\draw (-0.129688265081,5.46025055157) --(-0.137835845961,5.44945690694) --(-0.169530746015,5.40188263094) --(-0.184493224503,5.41173032523) --(-0.180263540467,5.41908596914) --(-0.189525138174,5.44852870551) --(-0.144946517879,5.4403075913) --(-0.149718257913,5.45915977537) --(-0.103376179201,5.42851291632) --(-0.0870500498002,5.40179599923) --(-0.0714173643659,5.409514481) --(0.00819197775937,5.46192982599) --(-0.0049360474213,5.49085645995) --(-0.0322823737127,5.42968077663) --(-0.0419602368851,5.42329970757) --(-0.0636556802291,5.45956542581) --(-0.0861204263057,5.51048662871) --(-0.121084962967,5.52682396381) --(-0.131553246723,5.53229279111) --(-0.135609067779,5.51323209021) --(-0.118980753463,5.5160885252) --(-0.148205839466,5.50087597891) --(-0.15327386707,5.45909937566) --(-0.149124713613,5.46712347105) --(-0.105145380659,5.41293393546) --(-0.107824614204,5.41581504252) --(-0.11661863327,5.42395144159) --(-0.119525890783,5.42811675919) --(-0.131612759816,5.44983369759) --(-0.129493324822,5.49858092557) --(-0.146405133569,5.51998662094) --(-0.129940412973,5.50981279454) --(-0.115351060757,5.48726429724) --(-0.110799702348,5.45847114403) --(-0.0933986281031,5.4850993941) --(-0.118971976443,5.51617021898) --(-0.176983388326,5.49120140458) --(-0.118703078345,5.46206535153) --(-0.072577972695,5.48348651768) --(-0.0716800128322,5.46155618522) --(-0.0652339830329,5.41596395741) --(-0.0766067815061,5.35860868653) --(-0.0897011980139,5.37748279688) --(-0.11775590968,5.37751711411) --(-0.139782284224,5.43147927391) --(-0.183547975789,5.44590354028) --(-0.196897760201,5.40580030297) --(-0.166131126812,5.42998682917) --(-0.163067487049,5.46951435312) --(-0.144511746319,5.49881329831) --(-0.181623802603,5.46469782339) --(-0.179444945547,5.45751780276) --(-0.149689288542,5.44351779583) --(-0.118820977101,5.38599819144) --(-0.151960327142,5.3437417121) --(-0.169562095314,5.33671349176) --(-0.176543373698,5.33434916848) --(-0.142489549814,5.3618438164) --(-0.135953777924,5.40423134861) --(-0.0557654225195,5.45861604437) --(-0.0217368636365,5.3965454259) --(-0.0303505833474,5.41958417688) --(-0.00805864612454,5.44936931349) --(-0.0575906645176,5.44555858915) --(-0.0597037976291,5.44382648908) --(-0.0279360819684,5.46035004655) --(-0.043949257202,5.44210191757) --(-0.0313515851426,5.41719639303) --(-0.0422109323709,5.44693667115) --(-0.0467318334501,5.48796539963) --(-0.0251914536974,5.42562719686) --(-0.00406496799233,5.37718518457) --(0.0425720720965,5.40777316462) --(0.0787007371743,5.40349962899) --(0.0954933420697,5.38086314544) --(0.0680974490681,5.43298716473) --(-0.00777438863359,5.45239268858) --(0.0229499923129,5.48690771312) --(0.0422646594003,5.51490178325) --(0.0745224367101,5.49865243952) --(0.0859082787613,5.59379715354) --(0.100107989199,5.62683521657) --(0.112314563158,5.61853785219) --(0.0801158668547,5.62289562425) --(0.0478803952849,5.65865840476) --(0.0367985521898,5.61089347239) --(0.0353160533149,5.64076399051) --(0.0241540022786,5.67704497743) --(0.0204158750681,5.65521015045) --(0.00745301527477,5.6498059526) --(0.048020275031,5.60887665482) --(0.0360105138625,5.58686832012) --(0.0292993252416,5.55078529388) --(0.0513134508393,5.52174219006) --(0.0765512007375,5.54526187941) --(0.0575227117274,5.61057263253) --(0.118820073195,5.58052252975) --(0.112256784848,5.576521536) --(0.13660184597,5.58065720988) --(0.107739498023,5.58738987095) --(0.184578656748,5.62499970146) --(0.146357148579,5.65280619662) --(0.165097339663,5.70388282592) --(0.149921812951,5.67196841551) --(0.232345526318,5.67035373445) --(0.187240721369,5.68777714057) --(0.244349087713,5.64981405048) --(0.236764557822,5.64985036339) --(0.220521433298,5.64046576439) --(0.271815722444,5.66375233553) --(0.326704734494,5.66090930422) --(0.32713609839,5.66336080299) --(0.347993402996,5.64637365494) --(0.348575812014,5.66216466155) --(0.350176261507,5.7003918285) --(0.365904806452,5.70200878035) --(0.392039222893,5.68949602911) --(0.380752354633,5.62681268359) --(0.400595231733,5.62266035058) --(0.445936652131,5.60120262996) --(0.436450949276,5.63790818198) --(0.460598452659,5.66338130729) --(0.494043971244,5.67463863882) --(0.527040851898,5.66765976086) --(0.523193295112,5.6916068329) --(0.483314653637,5.67578952537) --(0.473062965342,5.72048117323) --(0.495581790222,5.73774335075) --(0.560228943107,5.76300594879) --(0.543394625882,5.73494485266) --(0.540427302668,5.74451065939) --(0.552255209824,5.70878968785) --(0.607746819646,5.70161344115) --(0.639950123106,5.68363600215) --(0.579695777256,5.65510204126) --(0.565944437538,5.67688256389) --(0.593279124683,5.65937113814) --(0.611872444348,5.67261597439) --(0.617182183842,5.64431883293) --(0.59478934008,5.67010935163) --(0.591014725846,5.68432615876) --(0.56490297537,5.65205878404) --(0.557298879461,5.69257800692) --(0.58599504505,5.65589328833) --(0.580229576456,5.64942888028) --(0.547535528192,5.61887324957) --(0.542730118795,5.62799160887) --(0.508262952173,5.63328967225) --(0.524318744116,5.64944416145) --(0.529957813967,5.66897161826) --(0.540503058454,5.6909766295) --(0.528947329905,5.7373459204) --(0.51697681751,5.70317222662) --(0.483564691641,5.70429042139) --(0.432016120235,5.73274020097) --(0.436199945786,5.76989646635) --(0.457015334089,5.73582124188) --(0.448565561573,5.7589443949) --(0.440406524181,5.74356527331) --(0.424343043911,5.76256018267) --(0.421877901234,5.7648043053) --(0.42323106931,5.77620546183) --(0.382910889523,5.79367966516) --(0.397905388745,5.80477675194) --(0.416917373891,5.84255693288) --(0.4445154389,5.84171706106) --(0.411779874737,5.8821019757) --(0.443366789584,5.92119452672) --(0.477310372121,5.98086073991) --(0.495762965365,5.983212639) --(0.504431165709,5.96328723314) --(0.501123220456,6.0088166527) --(0.506398359181,6.00854245777) --(0.496682636215,5.95428536815) --(0.50954265438,5.95936069857) --(0.495059928003,5.99135866341) --(0.471285707372,5.97132089607) --(0.524920912725,5.9664161041) --(0.508780363189,6.01701027875) --(0.508754972703,5.98820279129) --(0.495085398178,5.97022864992) --(0.53116059421,5.9798433308) --(0.570674305865,6.00544135652) --(0.561484820696,5.98706452773) --(0.602532758172,5.94713891483) --(0.585654228239,5.95525541755) --(0.590383328168,5.97304005438) --(0.626996041449,5.91496391173) --(0.608465207194,5.90885293169) --(0.630233304096,5.89328011567) --(0.670889141368,5.91503257326) --(0.707047305174,5.92808052508) --(0.693164622739,5.89367541097) --(0.71488356915,5.87619527614) --(0.699502142353,5.89090744752) --(0.719464355659,5.92771649544) --(0.679102145081,5.92513682043) --(0.657498014144,5.91961359244) --(0.700461717062,5.88485067803) --(0.673248755487,5.90697156638) --(0.693810788269,5.93107830438) --(0.65547361269,5.91003873989) --(0.669793659809,5.91113056485) --(0.653206561098,5.95344577452) --(0.666127707226,5.91700255893) --(0.661223929835,5.95949048361) --(0.677702457394,5.91716385002) --(0.682569604204,6.01308168823) --(0.741865148406,5.9747220866) --(0.751724251712,6.00952224015) --(0.762945776044,6.0197682954) --(0.783807976238,5.98627805872) --(0.77992894336,5.98518836645) --(0.756289424114,5.98002865077) --(0.783641814077,6.02151566531) --(0.810165707682,6.06808997106) --(0.774258640506,6.10917915575) --(0.774714659489,6.12715259206) --(0.813948939248,6.18309660259) --(0.81458132524,6.16180491508) --(0.822117573241,6.18537245853) --(0.833736097607,6.2004911165) --(0.869229117229,6.14323019316) --(0.864390096546,6.16251040623) --(0.853865675959,6.20618163013) --(0.848604792948,6.17190716148) --(0.818928068971,6.1732438636) --(0.839574712705,6.17438366746) --(0.803640655645,6.20877394026) --(0.78844359668,6.25173142398) --(0.796525494176,6.25826556607) --(0.822164051394,6.27240894621) --(0.841908027055,6.27154215445) --(0.804424115235,6.26268498144) --(0.792079087931,6.33927982628) --(0.788624072981,6.35004119563) --(0.75902944259,6.39371232533) --(0.768957908714,6.41443334527) --(0.778364880842,6.42617507392) --(0.8110343583,6.43890640464) --(0.779093499315,6.48035448158) --(0.813670393189,6.50432967772) --(0.866339429682,6.4952973904) --(0.904390821526,6.46578073972) --(0.919841278589,6.51180957048) --(0.96954653538,6.49387118399) --(1.00837777759,6.48953146265) --(1.00698698796,6.50249105639) --(1.02087773246,6.47086310431) --(1.01789539905,6.47003225584) --(0.994446468115,6.48820347108) --(0.972732588474,6.52652130886) --(0.940303729764,6.4978409932) --(0.928889505865,6.52084031476) --(0.901112003058,6.53552224961) --(0.888748493911,6.54059347536) --(0.885601573544,6.60321171389) --(0.94434910151,6.58586921613) --(0.981453032484,6.58759683906) --(1.00987686238,6.56298498402) --(1.03009660876,6.53939164294) --(1.06916875829,6.56412137808) --(1.02733958792,6.59455660733) --(1.08658401626,6.57674096841) --(1.11106821199,6.59680810885) --(1.13947465576,6.63757814158) --(1.17818206157,6.66795776307) --(1.15287445542,6.63292996816) --(1.20931483162,6.62322409782) --(1.25070642102,6.63291308928) --(1.22310901289,6.65247451792) --(1.21007548977,6.64729488446) --(1.20192471832,6.72935540206) --(1.23504365104,6.72899160912) --(1.24969211891,6.71285021934) --(1.20981804374,6.73078470167) --(1.22635420387,6.73530804335) --(1.20873702679,6.72649525184) --(1.21218984368,6.76468113563) --(1.24665915365,6.79964198629) --(1.24530488049,6.78802113357) --(1.2070781875,6.81581280018) --(1.19170262456,6.82454099454) --(1.23773798214,6.82096195787) --(1.24159012271,6.83106539456) --(1.21876012546,6.83600485878) --(1.23387630626,6.80288609049) --(1.22755427647,6.82341384863) --(1.25241541297,6.83390441753) --(1.25524337884,6.83463851165) --(1.27099553139,6.80113607854) --(1.20529225161,6.765319235) --(1.21153360157,6.74468066582) --(1.19136406512,6.69158353427) --(1.22558607481,6.70917248116) --(1.26471643189,6.74097344685) --(1.27402362388,6.74338641439) --(1.27001792918,6.74698457337) --(1.22474605485,6.78111189275) --(1.22229776845,6.8009065064) --(1.21733743938,6.79713934193) --(1.20869136766,6.82219129545) --(1.19184925141,6.87811329325) --(1.16585457241,6.88614966717) --(1.17832156566,6.8407260311) --(1.18636770368,6.84734381032) --(1.14423756848,6.85373162093) --(1.13267898019,6.86156899656) --(1.08682177227,6.82126662303) --(1.11793698826,6.84907867849) --(1.11137899998,6.87045452314) --(1.12168806578,6.89648054189) --(1.10982427281,6.96298558606) --(1.11072844317,6.97753904585) --(1.18449518876,6.98297010276) --(1.19428494043,7.01421183915) --(1.17436550395,7.07747366886) --(1.14249641231,7.13143589071) --(1.14356266485,7.10860570484) --(1.12237450161,7.10810873275) --(1.10353209397,7.1626398249) --(1.11713540992,7.16539024076) --(1.13126419554,7.17721825242) --(1.13317107273,7.18726852902) --(1.11515754447,7.14173847127) --(1.10962018206,7.16092548611) --(1.11234235935,7.1662881708) --(1.10177086702,7.1363293616) --(1.12925823621,7.12525015672) --(1.1087815371,7.13981074977) --(1.08004709022,7.15789187284) --(1.05072831344,7.17588084164) --(1.09286458511,7.22790696806) --(1.08884755635,7.24053533009) --(1.06326934055,7.25844865326) --(1.03592782532,7.26005481245) --(1.03102678157,7.27066566455) --(1.03696003035,7.30110235973) --(1.02685097598,7.3048245056) --(1.03723966688,7.35188730612) --(1.05199772669,7.35301434615) --(1.07685270615,7.39471027918) --(1.11644250134,7.39634174179) --(1.09575623844,7.39679949727) --(1.09378784749,7.39739110995) --(1.10402371199,7.42774189198) --(1.10707789197,7.45290510955) --(1.12633334405,7.49958243072) --(1.12092061159,7.510085263) --(1.12945873934,7.50706598316) --(1.10311179725,7.49219920116) --(1.12264614255,7.50893001714) --(1.17832021358,7.4532177736) --(1.18819329674,7.41955621257) --(1.23794649807,7.44811808274) --(1.26538526616,7.43878039792) --(1.26090601585,7.49627441729) --(1.2854410248,7.45947029298) --(1.32593878609,7.43041925059) --(1.31685965484,7.44165026741) --(1.32462936545,7.49164185617) --(1.36752854874,7.47699689139) --(1.36372080593,7.49734825527) --(1.35574011422,7.48112913229) --(1.3490667009,7.51672417852) --(1.38023450154,7.49753396957) --(1.37363871504,7.52245276097) --(1.36928145174,7.49388624562) --(1.34480163187,7.48932553138) --(1.3504519511,7.49721665513) --(1.35393551051,7.51998949128) --(1.36195996329,7.55482543218) --(1.37858650119,7.54103975252) --(1.31700045307,7.53741910356) --(1.35177275463,7.58404363085) --(1.38692141002,7.55688496175) --(1.33120960746,7.60731029229) --(1.34152122037,7.58137662418) --(1.34135727273,7.5668548574) --(1.38023815611,7.53956413282) --(1.39596032799,7.54891656135) --(1.37257436705,7.54892524902) --(1.3703890203,7.56234792368) --(1.34804840936,7.54783933839) --(1.37183375473,7.57214960582) --(1.42661524763,7.57598965321) --(1.46506572099,7.58803836738) --(1.48167063718,7.59131713444) --(1.52156249307,7.58851325862) --(1.5266048991,7.58133295511) --(1.51860794846,7.59735552306) --(1.49662967488,7.59562130906) --(1.42954380856,7.62162801899) --(1.48368093883,7.65572814071) --(1.49407019601,7.68983630077) --(1.51765137019,7.75771407917) --(1.50817770833,7.77438951211) --(1.5241057973,7.82334169376) --(1.54103895833,7.8064186774) --(1.5152235217,7.81142455001) --(1.53999565255,7.74974081908) --(1.52219519794,7.75547656598) --(1.4563423663,7.74865984317) --(1.47354569159,7.72003028605) --(1.4657207809,7.74905950281) --(1.55314167542,7.78175325149) --(1.51534214364,7.88424967541) --(1.54102737057,7.85587580541) --(1.54373509788,7.80585994468) --(1.57012264226,7.79974691677) --(1.5398159111,7.7780183365) --(1.51356967955,7.77384843097) --(1.55980288883,7.82228359837) --(1.54485467646,7.78789626247) --(1.53078047833,7.81873872134) --(1.54018411429,7.80102137123) --(1.57958231755,7.80458965705) --(1.60960826177,7.80445491963) --(1.65438944504,7.81122111058) --(1.68377062906,7.84119726996) --(1.68875339836,7.85166321696) --(1.69239611937,7.82411661059) --(1.65589080495,7.86928544124) --(1.67544959576,7.87107259354) --(1.67611800973,7.8543601626) --(1.71596352576,7.8517994697) --(1.7706730925,7.86172480788) --(1.75789290881,7.90450034276) --(1.80693326777,7.91529996514) --(1.76687117317,7.93649682039) --(1.80560204317,7.99711646048) --(1.8005910386,8.00958062014) --(1.83816476061,8.04386273109) --(1.8707338624,8.03238310813) --(1.8779567685,8.08394172511) --(1.86467775629,8.09999616689) --(1.83828123448,8.12937957333) --(1.82758675474,8.09658204004) --(1.79550600143,8.10591998049) --(1.85066065037,8.15884417083) --(1.85681592729,8.18320072741) --(1.89619380967,8.20079988314) --(1.85355822688,8.24122765383) --(1.91468959796,8.28876330914) --(1.96653934287,8.27821625387) --(2.03136247739,8.26138992441) --(1.98577805335,8.24565776722) --(2.00090793494,8.28783316324) --(2.03787442467,8.22704334316) --(2.07879714606,8.25179117999) --(2.08817658002,8.26955375842) --(2.0726731197,8.306503638) --(2.02451958385,8.28547894418) --(1.96940856225,8.26840845847) --(2.03338777984,8.28496547895) --(2.01025896286,8.32895486396) --(1.98094522426,8.3297390665) --(1.99980441483,8.34232410386) --(2.01293854032,8.31860038051) --(2.05638345227,8.32575721457) --(2.07115106542,8.33235610275) --(2.06334222638,8.28159448706) --(2.07163999627,8.28689925434) --(2.08099699687,8.30065163895) --(2.05873908438,8.32496005261) --(2.0256151127,8.32328574978) --(2.029314238,8.35128099132) --(2.06199736343,8.37788478465) --(2.04570877667,8.38764636051) --(2.08589888486,8.39706305782) --(2.1310410224,8.43470905866) --(2.16011900324,8.44011864778) --(2.15143016692,8.42512165696) --(2.18465691658,8.44057982092) --(2.18023541451,8.43801170402) --(2.22251286696,8.43640574291) --(2.20939873567,8.43603198515) --(2.25431127376,8.47251970746) --(2.29602111517,8.42812189925) --(2.34391072616,8.42266493207) --(2.34322086309,8.42926939095) --(2.35326528228,8.44871852265) --(2.37594445152,8.45943773604) --(2.38956437309,8.4629946046) --(2.40899998458,8.40514464742) --(2.48221077261,8.38540564684) --(2.46810698815,8.40324244516) --(2.53501341235,8.3929900407) --(2.55112781655,8.39780987486) --(2.5648990495,8.42303602712) --(2.51283996427,8.46905267126) --(2.568134095,8.50168293478) --(2.53287848391,8.45030918595) --(2.52745433722,8.50727622935) --(2.50869338838,8.49110139212) --(2.53443517355,8.52654986988) --(2.52919585758,8.53943430798) --(2.50870915562,8.54009238366) --(2.50035946806,8.55322116525) --(2.47735613142,8.52304522657) --(2.47568906604,8.5678380904) ;
\draw (-0.129688265081,5.46025055157) --(-0.121093959506,5.4610665913) --(-0.0875305333138,5.48816249465) --(-0.0791776573285,5.53110406774) --(-0.120435369889,5.54568338949) --(-0.125457829546,5.55469397528) --(-0.170203336641,5.56521004475) --(-0.121852333646,5.57267984134) --(-0.149688736761,5.53631038352) --(-0.183487927874,5.5326738338) --(-0.134806753925,5.48605165105) --(-0.107730544753,5.4508774433) --(-0.109410161337,5.45734746831) --(-0.125655839616,5.48005691628) --(-0.137708610492,5.50979140244) --(-0.15842282125,5.50892594681) --(-0.185410318757,5.52839303481) --(-0.21663347143,5.48264204221) --(-0.281633086317,5.48063638905) --(-0.317614873893,5.43383734257) --(-0.292215141373,5.38985361333) --(-0.264181573923,5.34037748436) --(-0.290364152339,5.33754823974) --(-0.314750864319,5.30612061186) --(-0.356047989062,5.33945234653) --(-0.373687068182,5.3099335655) --(-0.414185653039,5.26873997246) --(-0.40941110652,5.27104891508) --(-0.457491216243,5.2391064273) --(-0.492925184221,5.2488391162) --(-0.505667001079,5.24278955695) --(-0.506199177289,5.237003131) --(-0.546160277545,5.29053488141) --(-0.535414490119,5.30719607084) --(-0.586499663452,5.28350521621) --(-0.577316526839,5.30616216815) --(-0.53927626732,5.32312723851) --(-0.562296072516,5.33633488248) --(-0.60588304538,5.41819812459) --(-0.641845980434,5.45234478335) --(-0.674498845541,5.46062085238) --(-0.712751896309,5.43995539096) --(-0.758459565492,5.44968094912) --(-0.770743592291,5.42830844812) --(-0.794374798838,5.45144704902) --(-0.762486876518,5.41160325269) --(-0.790568019678,5.43140283398) --(-0.766184443674,5.47758515687) --(-0.757055026768,5.51346029981) --(-0.806834633344,5.53159246653) --(-0.844065608967,5.56500039453) --(-0.886345691261,5.62674827461) --(-0.895755515712,5.61416460026) --(-0.929664830077,5.64477582546) --(-0.887426887312,5.6784359576) --(-0.859475102732,5.69457541046) --(-0.897856297716,5.70371256237) --(-0.920621714291,5.75202489937) --(-0.939573049315,5.72352576622) --(-0.935890373405,5.75961837806) --(-0.98306389014,5.77599699546) --(-0.986064134928,5.80991074335) --(-1.03585148227,5.83195210496) --(-1.05801407729,5.82651384017) --(-1.07346895597,5.79710793134) --(-1.10336892619,5.79623002708) --(-1.12594530185,5.79067293953) --(-1.12329935127,5.78910112744) --(-1.10266665511,5.78387907862) --(-1.1151517033,5.83441876876) --(-1.1218155129,5.85112979126) --(-1.10675369529,5.88932510602) --(-1.13250880725,5.90241404982) --(-1.15186037261,5.89538843796) --(-1.14307402181,5.87738470394) --(-1.19964421498,5.87992205259) --(-1.20703935891,5.90068886989) --(-1.17995425174,5.88812454079) --(-1.21503049997,5.85742822732) --(-1.29407534607,5.83621463425) --(-1.30707849159,5.85609681895) --(-1.30469945323,5.86146743102) --(-1.2919125195,5.81792778268) --(-1.26817785618,5.81592407725) --(-1.2612936816,5.79843199016) --(-1.27354208671,5.7827121938) --(-1.24883272369,5.77327695022) --(-1.24725014523,5.80458309763) --(-1.20460491426,5.7955915079) --(-1.2385783082,5.8359535871) --(-1.20001474545,5.91811481323) --(-1.26621902641,5.9218600118) --(-1.24492813808,5.89533244471) --(-1.25491397097,5.9246050732) --(-1.22797511306,5.92630841023) --(-1.24548354957,5.98207004055) --(-1.31582885684,6.0080488572) --(-1.3099748091,5.99855157435) --(-1.2898402447,6.05716491393) --(-1.30421266728,6.06839094564) --(-1.35769244113,6.03020095753) --(-1.35407148671,5.99750301377) --(-1.40160941443,5.97878565577) --(-1.40532968574,6.04375926675) --(-1.40776658881,6.02613704945) --(-1.40804371453,6.06244834036) --(-1.38392502151,6.07166391676) --(-1.42471245941,6.10225020277) --(-1.43248345977,6.07034183246) --(-1.41493164586,6.10289190737) --(-1.42088143431,6.11121587576) --(-1.41137775299,6.11050450793) --(-1.36418019907,6.11661007483) --(-1.33972959639,6.1202846116) --(-1.31519482415,6.16477053492) --(-1.34121322708,6.16782954687) --(-1.32749271331,6.1714748923) --(-1.29323665125,6.17685502514) --(-1.24213339697,6.19943260021) --(-1.2463358264,6.17392859171) --(-1.29603021169,6.18430492509) --(-1.2315721115,6.16330019907) --(-1.25973392814,6.14886350975) --(-1.2554599439,6.15832638547) --(-1.2601165812,6.19649714663) --(-1.23191734604,6.19006606374) --(-1.23887484776,6.19753058656) --(-1.21184663899,6.12371995301) --(-1.19120170333,6.09084612442) --(-1.22084381984,6.13172993181) --(-1.24907760157,6.14068180068) --(-1.23378479925,6.14849098016) --(-1.23506924115,6.1264919841) --(-1.22200374896,6.1280203437) --(-1.20289165754,6.17971423536) --(-1.18612251645,6.1803613348) --(-1.1253851822,6.12809479154) --(-1.14485192482,6.14480668879) --(-1.1463962764,6.17058837992) --(-1.1643162502,6.14946280594) --(-1.22180901146,6.23506872363) --(-1.26993340832,6.20854614692) --(-1.26716594464,6.23014761926) --(-1.22451377305,6.22280460776) --(-1.24079394436,6.2508879679) --(-1.21967973229,6.27330586496) --(-1.23369283113,6.23989367805) --(-1.25253665305,6.21463605751) --(-1.24892137657,6.20972723428) --(-1.23912100446,6.20388159309) --(-1.28421551934,6.21360744394) --(-1.23593645302,6.23389349625) --(-1.2502492576,6.24947583814) --(-1.28032900546,6.25522907099) --(-1.30703492174,6.24884198967) --(-1.33695985941,6.24609651805) --(-1.30867441693,6.23579029949) --(-1.30504980661,6.23523011764) --(-1.36040725416,6.25966704429) --(-1.38008797033,6.24387827557) --(-1.40121476705,6.28473912091) --(-1.42198370443,6.28286786242) --(-1.46163310961,6.34719268386) --(-1.49977463266,6.33323595135) --(-1.48504842145,6.33830680224) --(-1.53324858518,6.273966711) --(-1.54756734427,6.26865490387) --(-1.49306140219,6.26559093292) --(-1.58641050877,6.28714784028) --(-1.58161631538,6.32752762893) --(-1.57342888736,6.32737103126) --(-1.58555436037,6.34491546402) --(-1.56519971777,6.32844380806) --(-1.63143045316,6.29935662333) --(-1.64149958446,6.2339379772) --(-1.67486307461,6.21066651414) --(-1.71848685379,6.15401355598) --(-1.72057150764,6.15268552469) --(-1.77663812761,6.10176432368) --(-1.81768173868,6.09915556029) --(-1.83149979755,6.05125914765) --(-1.83995688991,6.12700343373) --(-1.83994699184,6.07548161624) --(-1.85546659478,6.10308951077) --(-1.83039945022,6.16535313568) --(-1.84598738547,6.11691166289) --(-1.8159444556,6.12459637607) --(-1.84505595577,6.14038215806) --(-1.78100835834,6.1754502208) --(-1.78934293841,6.1654399097) --(-1.77411887463,6.22634831861) --(-1.77758327402,6.22396099051) --(-1.78097553889,6.20932399041) --(-1.77739145135,6.23522151143) --(-1.85000173434,6.24883643929) --(-1.81657555553,6.26958563138) --(-1.82164489513,6.21523148462) --(-1.80457562509,6.20857984301) --(-1.7702548684,6.25639081951) --(-1.74147153749,6.24963380163) --(-1.75023083028,6.26059858073) --(-1.75391562685,6.23498020827) --(-1.76672004519,6.23527047715) --(-1.78875159744,6.2608749612) --(-1.82074911255,6.24322253489) --(-1.8068216592,6.21947699583) --(-1.78153012565,6.22243401792) --(-1.75025051243,6.2340697193) --(-1.75397178869,6.28876064363) --(-1.74909035684,6.32581100147) --(-1.73853049758,6.32018761358) --(-1.79371134524,6.27227457831) --(-1.79165410291,6.30716561776) --(-1.81495390786,6.33985925799) --(-1.85548496973,6.34760606298) --(-1.8566925845,6.35208308011) --(-1.8511121544,6.37913151089) --(-1.86777232426,6.37972062964) --(-1.86387417985,6.36129169508) --(-1.9081624407,6.37672494349) --(-1.98177278695,6.41708831092) --(-1.98465280663,6.43658570213) --(-2.04870995651,6.4958556103) --(-2.01894610566,6.53550482236) --(-2.03148536238,6.51521882894) --(-2.02527865165,6.47219322433) --(-2.00715511533,6.53264149733) --(-2.01581834007,6.59236210205) --(-1.99306093219,6.56886298602) --(-2.00828183749,6.54831582691) --(-2.0606928008,6.5494758634) --(-2.02974029001,6.53764996615) --(-2.00069227961,6.55896975392) --(-1.98723229219,6.60569606301) --(-2.01197140468,6.60297974664) --(-1.91650479866,6.70021952976) --(-1.87664966382,6.71018716788) --(-1.88850132944,6.68693324704) --(-1.8758653133,6.66737116602) --(-1.9066786129,6.69127658113) --(-1.89026035033,6.70873796511) --(-1.83589204603,6.70010885509) --(-1.83928917662,6.66683229408) --(-1.86439338771,6.64906743208) --(-1.8764296843,6.66437895154) --(-1.87400934487,6.64257044074) --(-1.91419305485,6.64085009462) --(-1.923795256,6.64279986344) --(-1.94802081547,6.62266772897) --(-1.91338941023,6.61742606964) --(-1.92606295112,6.60802703243) --(-1.87791900376,6.61131018925) --(-1.86969582408,6.59034299413) --(-1.87327646097,6.55152225024) --(-1.86086507354,6.54383277491) --(-1.79437129482,6.52637736139) --(-1.81286695262,6.53358614722) --(-1.81481372552,6.56452006625) --(-1.84315401605,6.58504000807) --(-1.85493775852,6.51274956691) --(-1.82280429693,6.55038932762) --(-1.84633931623,6.54796668913) --(-1.81475608305,6.52965300751) --(-1.82577117221,6.48371673923) --(-1.84456177445,6.50824969388) --(-1.87960657953,6.51698261142) --(-1.90891862192,6.5049901642) --(-1.95337807309,6.54867263637) --(-1.93931600429,6.52840793865) --(-1.94363593102,6.56520810397) --(-1.92112922197,6.59305643667) --(-1.93024057631,6.54144887837) --(-1.8957192037,6.58212552721) --(-1.87577562518,6.58826051694) --(-1.83837400092,6.61346077046) --(-1.817256427,6.60651397728) --(-1.79538009349,6.60459939868) --(-1.74512090562,6.61781646893) --(-1.73317998838,6.6156863299) --(-1.75169917503,6.61796485919) --(-1.70302570175,6.59883805942) --(-1.69952248635,6.58049389658) --(-1.71501183862,6.55654490603) --(-1.71072098774,6.53755652768) --(-1.68823188667,6.49230575536) --(-1.70895262403,6.49459328902) --(-1.72542898016,6.4833389689) --(-1.73606702469,6.4975756975) --(-1.72548140462,6.52801984759) --(-1.67303383215,6.56865613913) --(-1.72781754061,6.58041676494) --(-1.71921046269,6.5705131958) --(-1.68783462161,6.5683559209) --(-1.69558188832,6.60863598469) --(-1.68307958501,6.62768271844) --(-1.71313737863,6.60258082756) --(-1.7632991204,6.55108830666) --(-1.7195310162,6.54232759695) --(-1.74360236719,6.5258467773) --(-1.76305698986,6.58058090334) --(-1.79132234363,6.57965149608) --(-1.8218373049,6.59229685847) --(-1.86289654286,6.59823383168) --(-1.87346193643,6.58854748979) --(-1.844904681,6.62781397774) --(-1.87348278551,6.62862454571) --(-1.87745947789,6.62058549796) --(-1.90025516231,6.64924699252) --(-1.92800767764,6.63943308201) --(-1.91040500389,6.68323662635) --(-1.92949200354,6.68789576829) --(-1.91733406887,6.66468557047) --(-1.93742820572,6.64801644162) --(-1.95216733557,6.5988493461) --(-1.92167234506,6.64978444246) --(-1.91174247591,6.63928692619) --(-1.95552571656,6.639755164) --(-1.9177012486,6.62148466874) --(-1.92059648357,6.60493386266) --(-1.92805513893,6.62323127375) --(-1.92864345776,6.61603570379) --(-1.97262676214,6.66883877005) --(-2.05195916762,6.63529316495) --(-2.05249670911,6.67332270951) --(-2.01860188166,6.6748787082) --(-1.95850739466,6.66405476274) --(-1.92729284054,6.70057753879) --(-1.93293605744,6.70175126101) --(-1.95468694555,6.74547477081) --(-2.03038094516,6.76637623999) --(-2.03767718222,6.78560353938) --(-2.04623509405,6.77626713481) --(-2.08975438986,6.80323456194) --(-2.10523131273,6.77716071943) --(-2.16705368342,6.7853393086) --(-2.16572915818,6.76948208316) --(-2.12056218413,6.78341489484) --(-2.15210988943,6.79013542762) --(-2.17408777068,6.82718868779) --(-2.18202746928,6.83570164161) --(-2.19628138481,6.77231402252) --(-2.2436660962,6.77137787003) --(-2.21799992876,6.82022522131) --(-2.19625030065,6.83799424026) --(-2.22378558724,6.84474035721) --(-2.21798678618,6.84666480402) --(-2.23590398736,6.84924134335) --(-2.25328190395,6.87671222611) --(-2.23449065715,6.92373491816) --(-2.26521807739,6.93323228105) --(-2.27238639192,6.92687016637) --(-2.2610046632,6.96118297391) --(-2.33806803127,6.94191854405) --(-2.31124315691,6.95778581861) --(-2.33688788488,6.98670716695) --(-2.36503114356,6.96421972487) --(-2.35540375062,6.97044496713) --(-2.34488415567,6.96192096882) --(-2.32248276092,6.91911207394) --(-2.35156241664,6.91839024579) --(-2.31273337314,6.93203435103) --(-2.31745067972,6.91862119751) --(-2.31404194021,6.94442653652) --(-2.30962615186,6.9195649184) --(-2.27589608501,6.93958311891) --(-2.28162535969,6.89585897389) --(-2.2501712344,6.89397365332) --(-2.27126059428,6.89136982209) --(-2.25037071581,6.87670320098) --(-2.25456773149,6.88007172088) --(-2.24443083102,6.86177248653) --(-2.21226899763,6.91311321881) --(-2.16441526149,6.93560199554) --(-2.18908335218,6.90134080476) --(-2.22521525024,6.95157183481) --(-2.25351232745,6.95599814567) --(-2.29448284443,6.98718100857) --(-2.29229236388,6.99312153547) --(-2.3367610502,6.95063703514) --(-2.34694795191,6.9381896455) --(-2.3209281965,6.97392786094) --(-2.36601261068,7.01229177126) --(-2.35271443424,7.11026638102) --(-2.39793920949,7.11355698004) --(-2.35049660202,7.08566894315) --(-2.3429284054,7.06772246525) --(-2.33750484187,7.08173948756) --(-2.34668439087,7.09583329078) --(-2.32550609234,7.11676234711) --(-2.32159136972,7.14729045954) --(-2.33321808493,7.15077601453) --(-2.33360235572,7.16426533913) --(-2.29929967892,7.12305988973) --(-2.33312440882,7.18334052348) --(-2.37332848283,7.21536559354) --(-2.34407766282,7.2671058877) --(-2.38157080036,7.30454539878) --(-2.39453808814,7.33350829192) --(-2.39790881515,7.36030009712) --(-2.40862884914,7.37768167803) --(-2.40070614522,7.41719677773) --(-2.3988268612,7.36667397046) --(-2.38824401269,7.38904704586) --(-2.38743060419,7.36160866504) --(-2.38810978354,7.36997518624) --(-2.41345887156,7.34115956002) --(-2.37543483182,7.32155309012) --(-2.37318139346,7.31856359113) --(-2.37353801726,7.31048074399) --(-2.38764236307,7.29899729506) --(-2.40495422324,7.29957662248) --(-2.39845462733,7.32060614459) --(-2.389473815,7.33870367076) --(-2.38414148116,7.36253387237) --(-2.41072353781,7.38170268527) --(-2.41410042686,7.40705210771) --(-2.42567160552,7.34917606229) --(-2.43216451362,7.34555876901) --(-2.42858738494,7.36953888518) --(-2.50755147761,7.36404313909) --(-2.5235975084,7.38499874215) --(-2.56995130315,7.37480275714) --(-2.57739977568,7.33385071541) --(-2.59332653889,7.31378429318) --(-2.61962320046,7.32036994221) --(-2.6639700223,7.34896163515) --(-2.66093582016,7.41932668177) --(-2.64198427181,7.44354534206) --(-2.63556079368,7.46416329889) --(-2.64560283662,7.42891178317) --(-2.65939353205,7.49123711348) --(-2.69414141571,7.50403994484) --(-2.70633950235,7.48976974603) --(-2.66787866118,7.56205300482) --(-2.68643044489,7.61631213827) --(-2.64223277071,7.58608986219) --(-2.63204497609,7.58253894491) --(-2.62360227639,7.59804968969) --(-2.61757618063,7.61767982513) --(-2.63226358107,7.56671976193) --(-2.60097054781,7.58682475792) --(-2.57769090432,7.59102072394) --(-2.58063741236,7.62279409819) --(-2.59908253204,7.66176423247) --(-2.62938503156,7.67751680115) --(-2.63822536028,7.68901503735) --(-2.65374022677,7.68992081013) --(-2.66104808424,7.7133631396) --(-2.67946623306,7.75918085451) --(-2.67743207235,7.71625976604) --(-2.72891283852,7.75689833423) --(-2.7191383121,7.77346324926) --(-2.71192325606,7.79204282323) --(-2.76723789079,7.72723835822) --(-2.75109048439,7.70863821562) --(-2.72782837578,7.74489180496) --(-2.70512232051,7.74240683462) --(-2.72799519834,7.77383466145) --(-2.74948029444,7.77932627568) --(-2.70098264835,7.79452605929) --(-2.72253494379,7.7981857508) --(-2.73040933225,7.81701700034) --(-2.74317929202,7.80983091793) --(-2.73282537753,7.8372440625) --(-2.66693302279,7.84150574261) --(-2.71100061427,7.8692325128) --(-2.70151291218,7.8637785375) --(-2.71704384021,7.89455645772) --(-2.72960432922,7.91735829698) --(-2.73746858309,7.86374353975) --(-2.756126047,7.9064609888) --(-2.76172518386,7.87034681999) --(-2.76551688995,7.8652130248) --(-2.78494420228,7.94729209989) --(-2.80470622331,7.94232471608) --(-2.83267640475,7.89273990349) --(-2.84059356989,7.89442189841) --(-2.855861061,7.85188679943) --(-2.82447171892,7.87464694428) --(-2.88423281816,7.89874285501) --(-2.88060875471,7.90821312347) --(-2.83123671724,7.91800976571) --(-2.78619590969,7.90138773061) --(-2.74034768744,7.88078232307) --(-2.70085648349,7.87530783492) --(-2.72113895275,7.87416815232) --(-2.73441337584,7.9248888375) --(-2.76674493766,7.94952666259) --(-2.76006535788,7.93195648679) --(-2.73392396862,7.938550081) --(-2.77101586907,7.94452203204) --(-2.77204238816,7.95544450852) --(-2.79625061529,7.93785410734) --(-2.77239891868,7.99043934246) --(-2.73163430658,8.04620658967) --(-2.71988148321,8.07598427222) --(-2.75791183118,8.06948724157) --(-2.77185681132,8.10105969162) --(-2.75014460457,8.14517506845) --(-2.78384770622,8.16713609952) --(-2.81715033269,8.18003600498) ;
\draw (-3.7,7.7) arc (125:55:7);
\draw (-3.7,7.7) arc (125:130:7);

\end{tikzpicture}
\caption{Path of a pair of individuals that reach the frontier of the branching Brownian motion.}
\end{figure}

\begin{proof}
We choose $C>0$ large enough such that for any $t \geq 1$, $y \in [1,t^{1/2}]$ and $v,v' \in \S^{d-1}$ verifying $\norm{v-v'}>Ct^{-1/2}$, we have $\left\{ x \in \R^d : \norm{x} \leq f^{t,y}_t, x.v \geq f^{t,y}_t - 1, x.v' \geq f^{t,y}_t - 1 \right\} = \emptyset$. Thus we assume in the rest of the proof that $\calA^{t,y}_v \cap \calA^{t,y}_{v'} = \emptyset$.

Let $v \neq v' \in \S^{d-1}$ be such that $v.v' \geq 0$ and $\norm{v-v'} > C t^{-1/2}$. We set $w = \frac{v+v'}{\norm{v+v'}}$, $w'=\frac{v-v'}{\norm{v-v'}}$ and $\theta \in [0,\pi/4]$ such that $v = w \cos(\theta) + w' \sin(\theta)$. Given $B$,$B'$ two independent Brownian motions and $s \leq t$ we set $W^s : r \mapsto B_{r \wedge s} + B'_{(r-s)_+}$.

We apply the many-to-two lemma, we obtain
\begin{equation}
  \label{eqn:manytotwoApplied}
  \E\left[ \# \calA^{t,y}_v \# \calA^{t,y}_{v'} \right] \leq \int_0^t e^{2t-s}\P\left[ \begin{array}{l} B_t.v \geq f^{t,y}_t-1, \norm{B_r} \leq f^{t,y}_r, r \leq t \\ W^s_t.v' \geq f^{t,y}_t - 1, \norm{W^s_r} \leq f^{t,y}_r, r \leq t\end{array} \right].
\end{equation}
Using the Markov property at time $s$, we have
\[
  e^{2t-s}\P\left[ \begin{array}{l} B_t.v \geq f^{t,y}_t-1, \norm{B_r} \leq f^{t,y}_r, r \leq t \\ W^s_t.v' \geq f^{t,y}_t - 1, \norm{W^s_r} \leq f^{t,y}_r, r \leq t\end{array} \right]\leq e^s \E\left[ \phi_{s,v}(B_s) \phi_{s,v'}(B_s) \ind{\norm{B_r} \leq f^{t,y}_r, r \leq s}\right],
\]
where, for $v \in \S^{d-1}$, $\phi_{s,v} : x \in \R^d \mapsto e^{t-s}\P\left[ (B_{t-s}+x).v \geq f^{t,y}_t-1, \norm{B_r+x} \leq f^{t,y}_{s+r}, r \leq t-s \right]$. For any $x \in \R^d$ such that $\norm{x} \leq f^{t,y}_s$, setting $\beta = B.v$, we have
\begin{align*}
  \phi_{s,v}(x) &\leq e^{t-s}\P\left[ \beta_{t-s} \geq f^{t,y}_t - 1 - x.v, \beta_r \leq f^{t,y}_{s+r}-x.v, r \leq t-s \right]\\
  &\leq \E\left[ e^{-\sqrt{2}\beta_{t-s}} \ind{\beta_{t-s} \geq \tilde{f}^{t,y}_t - 1 + \sqrt{2} s - x.v, \beta_r \leq \tilde{f}^{t,y}_{s+r} +\sqrt{2}s- x.v, r \leq t-s} \right]\\
  &\leq \frac{C}{(t+1)^{(d-4)/2}} e^{-\sqrt{2}y} e^{\sqrt{2}(x.v-\sqrt{2}s)} \frac{(f^{t,y}_s - x.v)_+}{(t-s+1)^{3/2}}, 
\end{align*}
by the Girsanov transform and \eqref{eqn:excursionBended}. Consequently, we have
\begin{multline*}
e^{2t-s}\P\left[  B_t.v \geq f^{t,y}_t-1, \norm{B_r} \leq f^{t,y}_r, r \leq t, W^s_t.v' \geq f^{t,y}_t - 1, \norm{W^s_r} \leq f^{t,y}_r, r \leq t \right]\\
\leq C \tfrac{e^{-2\sqrt{2}y}}{t^{d-4}(t-s+1)^3} e^s \E\left[ e^{\sqrt{2}(B_s.(v+v') - 2 \sqrt{2}s)} (f^{t,y}_s - B_s.v)_+(f^{t,y}_s - B_s.v')_+ \ind{\norm{B_r} \leq f^{t,y}_r, r \leq s} \right].
\end{multline*}

We observe that for any $x \in \R^d$
\[
  f^{t,y}_s - x.v =  f^{t,y}_s - x.w \cos(\theta) - x.w' \sin (\theta) = f^{t,y}_s(1 - \cos(\theta)) + \cos (\theta)(f^{t,y}_s-x.w) -  x.w' \sin (\theta).
\]
Moreover, for any $a,b \in \R$ we have $(a+b)_+(a-b)_+ \leq 2a^2$ (both when $|b|< a$ and $|b|\geq a$) therefore
\begin{align*}
  \left(f^{t,y}_s - x.v\right)_+\left(f^{t,y}_s-x.v'\right)_+ &\leq 2 \left(f^{t,y}_s(1 - \cos(\theta)) + \cos(\theta) \left( f^{t,y}_s-x.w \right) \right)^2\\
  &\leq 4 \left( \left(f^{t,y}_s(1 - \cos(\theta))\right)^2 + \cos(\theta)^2 \left( f^{t,y}_s-x.w \right)^2 \right).
\end{align*}
As $\theta \in [0,\pi/4]$, there exists $C>0$ such that $1 - \cos(\theta) \leq C \theta^2$, thus
\[
  \left(f^{t,y}_s - x.v\right)_+\left(f^{t,y}_s-x.v'\right)_+ \leq C \left( (s+y)^2 \theta^4 + \left( f^{t,y}_s-x.w \right)^2 \right),
\]
yielding
\begin{multline*}
  e^{2t-s}\P\left[ B_t.v \geq f^{t,y}_t-1, \norm{B_r} \leq f^{t,y}_r, r \leq t, W^s_t.v' \geq f^{t,y}_t - 1, \norm{W^s_r} \leq f^{t,y}_r, r \leq t \right]\\
  \leq \frac{C e^{-2\sqrt{2}y} e^s}{(t+1)^{d-4}(t-s+1)^3} \E\left[ e^{2 \sqrt{2}(\beta_s \cos(\theta) -  \sqrt{2}s)} \left[ (s+y)^2 \theta^4 + (f^{t,y}_s - \beta_s)^2 \right] \ind{\beta_r \leq f^{t,y}_r, r \leq s} \right].
\end{multline*}
We use once again the Girsanov transform, we have
\begin{multline*}
  e^s \E\left[ e^{2 \sqrt{2}(\beta_s \cos(\theta) -  \sqrt{2}s)} \left[ (s+y)^2 \theta^4 + (f^{t,y}_s - \beta_s)^2  \right] \ind{\beta_r \leq f^{t,y}_r, r \leq s} \right]\\
  = e^{4 s\left( \cos(\theta)-1\right)} \E\left[ e^{\sqrt{2} (2 \cos(\theta)-1)  \beta_s} \left[ (s+y)^2 \theta^4 + (\tilde{f}^{t,y}_s - \beta_s)^2 \right] \ind{\beta_r \leq \tilde{f}^{t,y}_r, r \leq s} \right].
\end{multline*}
For any $\theta < \frac{\pi}{4}$, we have $2\cos(\theta)-1 > 0.4$. Decomposing with respect to the value of $\beta_s$,
\begin{align*}
  &\E\left[ e^{\sqrt{2} (2 \cos(\theta)-1)  \beta_s} \left[ (s+y)^2 \theta^4 + (f^{t,y}_s - \beta_s)^2 \right] \ind{\beta_r \leq f^{t,y}_r, r \leq s} \right]\\
  \leq & C \sum_{k=0}^{+\infty} e^{\sqrt{2} (2 \cos(\theta)-1) (\tilde{f}^{t,y}_s- k)} \left(  (s+y)^2 \theta^4 + (k+1)^2 \right) \P\left[ \beta_s - \tilde{f}^{t,y}_s \in [-k-1,-k], \beta_r \leq \tilde{f}^{t,y}_r, r \leq s \right]\\
  \leq & \frac{Cy(s+y)^2\theta^4}{(s+1)^{3/2}}\frac{(s+y)^{(d-1)/2}(t-s+1)^{3/2}}{(t+1)^{3/2}} e^{\sqrt{2}y+ 2 \sqrt{2}(\cos(\theta)-1) y},
\end{align*}
using \eqref{eqn:excursionBended}. We conclude that for any $s \leq t$,
\begin{multline}
  \label{eqn:domi}
  e^{2t-s}\P\left[ B_t.v \geq f^{t,y}_t-1, \norm{B_r} \leq f^{t,y}_r, r \leq t, W^s_t.v' \geq f^{t,y}_t - 1, \norm{W^s_r} \leq f^{t,y}_r, r \leq t \right]\\
  \leq C\frac{ye^{-\sqrt{2}y}}{(t+1)^{(d-1)/2}} \frac{(t+1)^{3/2}}{(s+1)^{3/2}(t-s+1)^{3/2}} \frac{\theta^4(s+y)^{(d+3)/2}}{(t+1)^{(d-1)/2}} e^{-1.1 \theta^2 (s+y)}.
\end{multline}
Note that for any $\lambda > 0$,
\begin{multline*}
  \qquad \int_0^{t/2} \frac{(t+1)^{3/2}}{(s+1)^{3/2}(t-s+1)^{3/2}} \frac{(s+y)^{(d+3)/2}}{(t+1)^{(d-1)/2}} e^{-\lambda (s+y)} ds\\
  \leq \frac{\int_0^{+\infty} s^{d/2} e^{-\lambda s}ds}{(t+1)^{(d-1)/2}}
  \leq \frac{\Gamma(d/2+1)}{\lambda^{d/2+1}(t+1)^{(d-1)/2}}. \qquad
\end{multline*}
Moreover, for any $s > t/2$,
\[
  e^{2t-s}\P\left[ B_t.v \geq f^{t,y}_t-1, \norm{B_r} \leq f^{t,y}_r, r \leq t, W^s_t.v' \geq f^{t,y}_t - 1, \norm{W^s_r} \leq f^{t,y}_r, r \leq t \right] \leq C e^{-1.1 \theta^2 t}
\]

Therefore, \eqref{eqn:manytotwoApplied} and \eqref{eqn:domi} yield
\[
  \E\left[ \# \calA^{t,y}_v \# \calA^{t,y}_{v'} \right] \leq \frac{Cye^{-\sqrt{2}y}}{(t+1)^{(d-1)/2}} \left[ \frac{1}{\theta^{d-2}(t+1)^{(d-1)/2}} + e^{-\theta^2 t} \right],
\]
which ends the proof.
\end{proof}

The proof of Lemma \ref{lem:crossedmoment} hints that with high probability, two individuals $u,u'$ alive at time $t$ close to the frontier of the process such that $\norm{X_t(u)-X_t(u')} \geq Ct^{1/2}$ verify $\text{MRCA}(u,u') = o_\P(t)$. Mixing Lemmas \ref{lem:firstmoment}, \ref{lem:secondmoment} and \ref{lem:crossedmoment}, we bound from below $\P(R_t \geq f^{t,y}_t)$.

\begin{lemma}
\label{lem:lowerbound}
For any $\epsilon>0$, there exists $C>0$ such that for any $t \geq 1$, $v \in \S^{d-1}$ and $y \in [1,t^{1/2}]$ we have
\[
  \P\left( \exists u \in \calN_t : \norm{X_t(u)} \geq \sqrt{2} t + \frac{d-4}{2\sqrt{2}} \log t + y, \frac{X_t(u)}{\norm{X_t(u)}} . v > 1-\epsilon \right) \geq \frac{ye^{-\sqrt{2} y}}{C}.
\]
\end{lemma}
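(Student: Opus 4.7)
The plan is to sharpen the single-direction bound \eqref{eqn:proba} by a second-moment argument applied to a counter that accumulates displacements in $\asymp t^{(d-1)/2}$ directions all close to $v$. Fix $\epsilon>0$; we may assume $\epsilon\leq 1/2$. First I would choose a maximal $Ct^{-1/2}$-separated family $v_1,\ldots,v_N$ inside the spherical cap $\{w\in \S^{d-1} : \norm{w-v}\leq \epsilon/2\}$, with $C$ large enough that Lemma \ref{lem:crossedmoment} applies to every pair. Standard packing on a $(d-1)$-dimensional surface gives $N\geq c_\epsilon t^{(d-1)/2}$, and the cap condition together with $\epsilon\leq 1/2$ forces $\norm{v_i+v_j}\geq 3/2$, so all hypotheses of Lemma \ref{lem:crossedmoment} are met.

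Set $X := \sum_{i=1}^N \#\calA^{t,y}_{v_i}$. Linearity combined with Lemma \ref{lem:firstmoment} gives $\E[X] \geq c_\epsilon y e^{-\sqrt{2}y}$. For $\E[X^2]$ I would split diagonal and off-diagonal contributions. The diagonal is controlled by Lemma \ref{lem:secondmoment}, yielding $\sum_i \E[(\#\calA^{t,y}_{v_i})^2]\leq C_\epsilon y e^{-\sqrt{2}y}$. For the off-diagonal, Lemma \ref{lem:crossedmoment} bounds each summand by $\frac{Cye^{-\sqrt{2}y}}{t^{(d-1)/2}}\bigl[\theta_{ij}^{-(d-2)} t^{-(d-1)/2} + e^{-\theta_{ij}^2 t}\bigr]$ with $\theta_{ij}=\arccos(v_i\cdot v_j)$. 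Parametrising the inner sum in polar coordinates around each $v_i$ (the density of points at angular distance $r$ is $\lesssim r^{d-2} t^{(d-1)/2}$ for $ct^{-1/2}\leq r\leq \pi$), the first term integrates to $\int_{c/\sqrt{t}}^\pi dr = O(1)$ and the second to $t^{(d-1)/2}\int_0^\infty r^{d-2} e^{-r^2 t}\, dr = O(1)$ via $u = r\sqrt{t}$. Summing over the $N$ choices of $i$ then absorbs the prefactor $1/t^{(d-1)/2}$, giving $\E[X^2]\leq C_\epsilon y e^{-\sqrt{2}y}$.

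Paley--Zygmund then yields $\P(X\geq 1)\geq (\E[X])^2/\E[X^2] \geq c_\epsilon y e^{-\sqrt{2}y}$. To pass from $\{X\geq 1\}$ to the event in the statement, on that event there is some $u\in\calA^{t,y}_{v_i}$, and from the definitions of $\calA^{t,y}_{v_i}$ and $\calA^{t,y}$, $\norm{X_t(u)}\in[f^{t,y}_t-1, f^{t,y}_t]$ and $X_t(u)\cdot v_i\geq f^{t,y}_t - 1$. Writing $\hat x := X_t(u)/\norm{X_t(u)}$, I get $\hat x\cdot v_i \geq 1 - 1/f^{t,y}_t$, and combined with $\norm{v_i-v}\leq\epsilon/2$ this gives $\hat x\cdot v > 1-\epsilon$ for $t$ large. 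An additive constant shift in $y$ (absorbed in $C$) identifies $f^{t,y}_t$ with $\sqrt{2}t + \frac{d-4}{2\sqrt{2}}\log t + y$, and the bounded-$t$ regime is handled by enlarging $C$.

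The step I expect to be the main obstacle is controlling the off-diagonal second moment: the interplay of the power-law term $\theta^{-(d-2)}$ and the Gaussian cut-off $e^{-\theta^2 t}$ in Lemma \ref{lem:crossedmoment} must integrate to $O(1)$ per source direction, which is only possible because the Gaussian cuts off exactly at the packing scale $\theta\asymp t^{-1/2}$. This is precisely what lets $N\asymp t^{(d-1)/2}$ directions be combined without polynomial loss in $t$, cancelling the $1/t^{(d-1)/2}$ prefactor of \eqref{eqn:proba}.
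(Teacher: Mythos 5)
Your proposal is correct and follows essentially the same route as the paper: a Paley--Zygmund / second-moment argument over a family of $\asymp t^{(d-1)/2}$ directions, pairwise separated by $Ct^{-1/2}$ inside a spherical cap around $v$, combining Lemmas \ref{lem:firstmoment}, \ref{lem:secondmoment} and \ref{lem:crossedmoment} and then shifting $y$ by a constant. The only cosmetic differences are that the paper builds the net explicitly as a rescaled lattice $\calL_{t,\epsilon}$ and evaluates the off-diagonal sum as a lattice sum over $\Z^{d-1}_t$ rather than via polar shells.
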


\begin{proof}
Let $v \in \S^{d-1}$, we set $v_2,\ldots v_d$ such that $(v,v_2,\ldots v_d)$ is an orthonormal basis of $\R^d$. Let $\epsilon>0$ and $t \geq 1$, we set
\[
  \calL_{t,\epsilon} = \left\{ w \in \S^{d-1} : w.v > 1 - \tfrac{\epsilon}{2},  \tfrac{t^{1/2}}{C} (w.v_j) \in \Z, j \in \{ 2,\ldots d\} \right\},
\]
where $C$ is a constant that we choose large enough such that Lemma \ref{lem:crossedmoment} holds. Note there exists $K>0$ such that $\frac{t^{(d-1)/2}}{K} \leq \# \calL_{t,\epsilon} \leq K t^{(d-1)/2}$. We observe that for any $t \geq 1$ large enough and $y \in [1,t^{1/2}]$, we have
\[
  \P\left( \exists u \in \calN_t : \norm{X_t(u)} \geq f^{t,y}_t-1, \frac{X_t(u)}{\norm{X_t(u)}} . v > 1-\epsilon \right) \geq \P\left[ \bigcup_{w \in \calL_{t,\epsilon}} \calA^{t,y}_w \neq \emptyset \right],
\]
and we bound this probability using the Cauchy-Scharz inequality. We have
\begin{equation}
  \P\left[ \bigcup_{w \in \calL_{t,\epsilon}} \calA^{t,y}_w \neq \emptyset \right]
  \geq \P\left[ \sum_{w \in \calL_{t,\epsilon}} \# \calA^{t,y}_w \geq 1\right]
  \geq \frac{\E\left[ \sum_{w \in \calL_{t,\epsilon}} \# \calA^{t,y}_w \right]^2}{\E\left[ \left( \sum_{w \in \calL_{t,\epsilon}} \# \calA^{t,y}_w \right)^2 \right]} \label{eqn:probab}.
\end{equation}

By Lemma \ref{lem:firstmoment}, we have
\[
  \E\left[ \sum_{w \in \calL_{t,\epsilon}} \# \calA^{t,y}_w \right] \geq \#\calL_{t,\epsilon} \frac{ y e^{-\sqrt{2} y}}{C t^{(d-1)/2}} \geq \frac{ye^{-\sqrt{2}y}}{C}.
\]
Similarly, using Lemma \ref{lem:secondmoment} we have $\E\left[ \sum_{w \in \calL_{t,\epsilon}} \left(\# \calA^{t,y}_w\right)^2 \right] \leq C y e^{-\sqrt{2} y}$. As $w.w' \geq Ct^{-1/2}$ for any $w \neq w' \in \calL_{t,\epsilon}$, we apply Lemma \ref{lem:crossedmoment} to compute
\begin{multline}
  \label{eqn:bigcrossmoment}
  \E\left[ \sum_{w \neq w' \in \calL_{t,\epsilon}}\# \calA^{t,y}_w \# \calA^{t,y}_{w'} \right]\\
  \leq \frac{C y e^{-\sqrt{2} y}}{t^{(d-1)/2}} \sum_{w \neq w' \in \calL_{t,\epsilon}} \left[ \frac{1}{\arccos(w.w')^{d-2}(t+1)^{(d-1)/2}} + e^{-\arccos(w.w')^2 t} \right].
\end{multline}
We observe there exists $C>0$ such that $\frac{\norm{w-w'}}{C} \leq \arccos(w.w') \leq C \norm{w-w'}$ for all $w, w' \in \calL_{\epsilon,t}$. Consequently, setting $\Z^{d-1}_t = \Z^{d-1} \cap \left[ -Ct^{1/2},Ct^{1/2} \right]^{d-1}$, \eqref{eqn:bigcrossmoment} becomes
\begin{multline*}
  \E\left[ \sum_{w \neq w' \in \calL_{t,\epsilon}}\# \calA^{t,y}_w \# \calA^{t,y}_{w'} \right]
  \leq \frac{C y e^{-\sqrt{2} y}}{t^{(d-1)/2}} \sum_{w \neq w' \in \calL_{t,\epsilon}} \left(\tfrac{1}{\norm{w-w'}^{d-2}(t+1)^{(d-1)/2}} + e^{-\norm{w-w'}^2 t} \right)\\
  \leq C y e^{-\sqrt{2} y} \sum_{(k_2,\ldots, k_d) \in \Z^{d-1}_t} \left(\tfrac{t^{(d-2)/2}}{ \left( \sum_{j=2}^d k_j^2 \right)^{(d-2)/2}(t+1)^{(d-1)/2}} + e^{-\left( \sum_{j=2}^d k_j^2 \right)} \right),
\end{multline*}
where $(k_2,\ldots k_d)$ are integers such that $(w-w').v_j = C t^{-1/2} k_j$. Note that
\[
  \sum_{(k_2,\ldots k_d) \in \Z^{(d-1)}_t} \frac{1}{\left(\sum_{j=2}^d k_j^2 \right)^{(d-2)/2}} \leq C t^{1/2} \quad \text{and} \quad \sum_{(k_2,\ldots k_d) \in \Z^{(d-1)}_t} e^{-\left( \sum_{j=2}^d k_j^2 \right)} \leq C,
\]
which yields $\E\left[ \sum_{w \neq w' \in \calL_{t,\epsilon}}\# \calA^{t,y}_w \# \calA^{t,y}_{w'} \right] \leq C ye^{-\sqrt{2}y}$.

By \eqref{eqn:probab}, we have
\[
  \P\left( \exists u \in \calN_t : \norm{X_t(u)} \geq f^{t,y}_t-1, \frac{X_t(u)}{\norm{X_t(u)}} . v > 1-\epsilon \right) \geq \frac{ye^{-\sqrt{2}y}}{C}.
\]
To conclude the proof, we observe that for any $t \geq 1$ large enough and $y \in [1,t^{1/2}]$, we have
\[
  f^{t,y-5}_t \leq \sqrt{2} t + \frac{d-4}{2\sqrt{2}} \log t + y \leq f^{t,y+5}_t.
\]
\end{proof}

\begin{proof}[Proof of Theorem \ref{thm:main}]
We set $r_t = \sqrt{2} t + \frac{d-4}{2\sqrt{2}} \log t$. The upper bound of Theorem \ref{thm:main} is a straightforward consequence of Lemma \ref{lem:tailUpperbound}. In effect
\[
  \lim_{y \to +\infty} \sup_{t \geq 0} \P\left( R_t \geq r_t + y \right) =0.
\]

The lower bound is obtained using a standard cutting argument. We observe the process $(\# \calN_t, t \geq 0)$ is a standard Yule process. In particular, for any $h > 0$, $\# \calN_h$ is a Geometric random variable with parameter $e^{-h}$. By Lemma \ref{lem:tailUpperbound}, we have $\P(R_h \geq \sqrt{2} h + h^{1/2}) \leq C h^{1/2} e^{-\sqrt{2} h^{1/2}}$. Applying the Markov property at time $h$, on the event $R_h \leq \sqrt{2} h + h^{1/2}$, the probability that $R_{t+h} \leq r_t - 2h$ is bounded from above by the probability that none of the $\# \calN_h$ individuals alive at time $h$ have a descendent that made a displacement greater that $r_t$. Thus
\begin{align*}
  \P(R_{t + h} \leq r_t - 2h) & \leq \P(R_h \geq \sqrt{2} h + h^{1/2}) + e^{-h}\sum_{j=0}^{+\infty} (1 - e^{-h})^j \P(R_t \leq r_t+1)^j\\
  &\leq C h^{1/2} e^{-\sqrt{2}h^{1/2}} + \frac{e^{-h}}{1 - (1 - e^{-h})\P(R_t \leq r_t+1)}.
\end{align*}
By Lemma \ref{lem:lowerbound}, $\sup_{t \geq 0} \P(R_t \leq r_t+1) < 1$, yielding $\lim_{h \to +\infty} \sup_{t \geq 0} \P(R_{t + h} \leq r_t - 2h) = 0$, which concludes the proof.
\end{proof}

\paragraph*{Acknowledgements.} I would like to thank Zhan Shi for its help at all stages of the research, Julien Berestycki for stimulating conversation and the referee for useful comments on the earlier version of the article.

\providecommand{\bysame}{\leavevmode\hbox to3em{\hrulefill}\thinspace}
\providecommand{\MR}{\relax\ifhmode\unskip\space\fi MR }
\providecommand{\MRhref}[2]{%
  \href{http://www.ams.org/mathscinet-getitem?mr=#1}{#2}
}
\providecommand{\href}[2]{#2}

\end{document}